\documentclass{article}

\usepackage{amsthm}
\usepackage{amsmath,latexsym,amsfonts,amssymb,graphicx}
\usepackage{graphicx}[dvipdfmx]
\usepackage{color}
\usepackage[margin=20mm]{geometry}

\newcommand{\R}{\mathbb{R}}

\def\vec#1{\mbox{\boldmath $#1$}}

\newtheorem{theorem}{Theorem}
\newtheorem{lemma}{Lemma}
\newtheorem{remark}{Remark}
\newtheorem{corollary}{Corollary}

\newtheorem{assumption}{Assumption}
\newtheorem{proposition}{Proposition}

\begin{document}

\title{Asymptotic analysis of the Gaussian kernel matrix\\ for partially noisy data in high dimensions}

\author{Kensuke Aishima
\thanks{Faculty of Computer and Information Sciences,  
  Hosei University, Tokyo 184-8585, Japan
  (\texttt{aishima@hosei.ac.jp}).}
}

\maketitle

\begin{abstract}
The Gaussian kernel is one of the most important kernels,
applicable to many research fields, including scientific computing and data science.
In this paper, we present asymptotic analysis
of the Gaussian kernel matrix
in high dimension under a statistical model of noisy data.
The main result is a nice combination of Karoui's asymptotic analysis
with procedures of constrained low rank matrix approximations.
More specifically, Karouli clarified an important asymptotic structure of the Gaussian kernel matrix,
leading to strong consistency of the eigenvectors,
though the eigenvalues are inconsistent.
This paper focuses on the above results and presents a consistent estimator with the use of the smallest eigenvalue,
whenever the target kernel matrix tends to low rank in the asymptotic regime.
Importantly, asymptotic analysis is given under a statistical model representing partial noise.
Although a naive estimator is inconsistent,
applying an optimization method for low rank approximations with constraints,
we overcome the difficulty caused by the inconsistency,
resulting in a new estimator with strong consistency in rank deficient cases.

\end{abstract}

\noindent
{\em Keywords}: Eigenvalue problems, Gaussian kernel, low rank approximation, consistent estimation

\noindent
{\em PACS}: 65F15, 65F55, 15A18, 62H12, 60B12  

\section{Introduction}
Kernel methods are standard techniques applicable to many fields of research, 
such as scientific computing and data science. The Gaussian kernel, in particular, 
is one of the most important kernels and is often applied in conjunction with 
support vector machines (SVM), principal component analysis (PCA), and so forth. 
This study addresses the asymptotic analysis of the Gaussian kernel.

The Gaussian kernel is mathematically described below.
Throughout the paper, for positive integers $p$ and $q$,
$I_{p}$ is a $p \times p$ identity matrix,
and $0_{p\times q}$ is a $p\times q$ zero matrix.
In addition, $\left[\cdot \right]_{pq}$ denotes
the $(p,q)$ elements of an argument matrix.
Regarding the norm, $\|\cdot \|$ means the 2-norm.
Given $n$ vectors
$\vec{x}_{i}\in \R^{d}\ (i=1,\ldots ,n)$,
define
\begin{align}\label{eq:gram}
\left[ K(\vec{x}_{1},\ldots ,\vec{x}_{n})\right]_{ij}
=
\exp \left( -\frac{\|\vec{x}_i-\vec{x}_j\|^2}{c_{d}} \right)
\qquad (i,j=1,\ldots ,n),
\end{align}
where $c_{d}$ is a scaling parameter.
The elements of the above matrix are presented as 
\begin{align*}
\begin{bmatrix}
   1 & \exp \left(-\frac{\|\vec{x}_{1}-\vec{x}_{2}\|^2}{c_d}\right) & \hdots & \exp \left(-\frac{\|\vec{x}_{1}-\vec{x}_{n}\|^2}{c_d}\right)  \\
   \exp \left(-\frac{\|\vec{x}_{2}-\vec{x}_{1}\|^2}{c_d}\right) & 1 & \ddots &  \vdots  \\
   \vdots & \ddots & \ddots&  \exp \left(-\frac{\|\vec{x}_{n-1}-\vec{x}_{n}\|^2}{c_d}\right)  \\
   \exp \left(-\frac{\|\vec{x}_{n}-\vec{x}_{1}\|^2}{c_d}\right) & \hdots & \exp \left(-\frac{\|\vec{x}_{n}-\vec{x}_{n-1}\|^2}{c_d}\right) &  1  
\end{bmatrix},
\end{align*}
where all the diagonal elements are 1.

The above matrix is interpreted as the Gram matrix
due to the kernel trick. 
Rigorously, the above matrix is the Gram matrix 
given by the inner product in
the reproducing kernel Hilbert space
as the feature space of nonlinear feature map.
With such a background, 
the asymptotic theory of kernel methods is often discussed in conjunction with 
the approximation theory of integral kernels~\cite{DP2025,KS2025IMA}.
In addition, under a statistic model, asymptotic structures of the Gaussian kernel matrices
are often analyzed together with some applications~\cite{NYA2020,NYA2021}.
For noisy data, the deconvolution kernel density estimator
is usually applied for efficiently removing random errors~\cite{Fan1991,LV1998}.
Regarding applied research, there are studies specializing in the Gaussian kernel on data containing missing values~\cite{SCJ2024,MGCJN2019}.
Among many researches, this study is closely relevant to
Karoui~\cite{Karoui2010} presenting an excellent pioneering work,
focusing on spectral analysis, 
specifically proving the consistency of eigenvectors for the Gaussian kernel matrix in the asymptotic regime.
On the basis of this, research into spectral distributions and their application to clustering 
has been actively pursued~\cite{AR2021,DM2023,Fan2019}.

The purpose of this study is to combine Karoui's asymptotic analysis~\cite{Karoui2010}
with procedures of constrained low rank matrix approximations.
Similarly to Karoui's formulation, the observation vectors $\vec{x}_{1},\ldots ,\vec{x}_{n} \in \R^{d}$ are contaminated with random noise.
Then, the eigenvectors of the kernel matrix
are consistent estimators due to its asymptotic matrix structure
under mild assumptions.
From such a perspective, we construct a consistent estimator
of the true kernel matrix in rank deficient cases.
The main contribution of this paper is
to develop the above asymptotic analysis and yield a specific estimator with strong consistency
for a more complicated statistical model
representing partial noise, where the existing naive estimations cannot preserve the consistency of the eigenvectors.

From another viewpoint,
the contribution addresses matrix eigenvalue analysis and low rank matrix approximations in numerical linear algebra.
The matrix low rank approximations
are obtained by the singular value decomposition (SVD)~\cite{GV2013,Saad2011},
successfully applied to constrained problems~\cite{Demmel1987,GHS1987}. 
Such important results for the low rank approximations are usually applied to the total least squares (TLS)~\cite{GV1980}.
Due to the importance of structured TLS by constraints, many efficient numerical methods have been actively developed~\cite{Hnetynkova2011,Huffel1987,Huffel1989,Huffel1991,LJY2022},
and the solvability conditions with the condition numbers are well analyzed~\cite{LJ2022,PW1993,wei1998,ZMW2017}.
In addition, the TLS for low rank coefficient matrices are well studied in~\cite{FB1994,MZW2020,MDB2021,Park2011}.
Importantly, the corresponding statistical asymptotic analysis is actively being developed~\cite{Aishima2024,Aishima2025,DK2018,KH2004,KMH2005}.
Moreover, note that low rank approximations of kernel matrices have recently received a lot of attention~\cite{AP2023,CETW2025,KS2025,WLD2018,WLMD2019}.
With such a background, for the Gaussian kernel matrix, we construct an estimator with strong consistency in rank deficient cases
based on the above constrained low rank matrix approximations. 
To the best of the author's knowledge,
this is the first study to directly apply the above procedures for the low rank matrix approximations with the constraints
to the asymptotic analysis of kernel methods.

This paper is organized as follows.
Section~\ref{sec:aim} is a description of
our problem setting, where we perform asymptotic analysis.
In Section~\ref{sec:mainresult}, the asymptotic analysis
for the Gaussian kernel matrix is presented,
where strong consistency of the eigenvectors is proved in the same manner as Karoui's result~\cite{Karoui2010},
newly constructing a consistent estimator of the kernel matrix in rank deficient cases.
Section~\ref{sec:mainresult2} presents the main results
of the asymptotic analysis under a statistical model representing partial noise,
resulting in a new estimator with strong consistency in rank deficient cases.
Finally, Section~\ref{sec:conclusion} gives the conclusion.

\section{Problem setting for consistency analysis in high dimension}\label{sec:aim}
With the research background in the previous section, 
we formulate a noise model
to construct an estimator. 
Basically, our formulation is a simplification of the formulation by Karoui~\cite{Karoui2010}.
In~\cite{Karoui2010}, general kernel matrices are discussed,
where the $(i,j)$ elements of the kernel matrix are given by $f({\|\vec{x}_{i}-\vec{x}_{j}\|_{2}}^{2})$
for $f$ in a certain class $\mathcal{F}$.
In such asymptotic analysis, since $\sup_{f \in \mathcal{F}} G(f)$ for objective functions $G$ is often considered,
the outer-probability statements are required.
Although it is not an issue for the rigorous analysis, 
for the readability, we reformulate the problem
restricted to the Gaussian kernel matrix,
resulting in a simple asymptotic analysis.

In the following, we consider random $d$ dimensional vectors 
$\vec{x}_{i}\in \R^{d}\ (i=1,\ldots ,n)$
given by
\begin{align}
\vec{x}_{i}=\vec{s}_{i}+\vec{\xi}_{i}\qquad (i=1,\ldots ,n),
\end{align}
where $\vec{s}_{i},\vec{\xi}_{i}\in \R^{d}\ (i=1,\ldots ,n)$ are 
important signal vectors and
random noisy vectors, respectively.
For the signal vectors $\vec{s}_{i}\in \R^{d}\ (i=1,\ldots ,n)$, we assume the following.

\begin{assumption}\label{as:s}
For integers $i,j\ (1\le i,j \le n)$, assume that $\lim_{d\to \infty}d^{-1} {\vec{s}_{i}}^{\top}\vec{s}_{j} $ exist, and
\begin{align}
\lim_{d\to \infty}\frac{\|\vec{s}_{i}\|^2}{d}>0 \qquad (i=1,\ldots ,n).
\end{align}
\end{assumption}

The above condition in Assumption~\ref{as:s}
is natural if we consider the case where the vectors $\vec{s}_{1}\ldots ,\vec{s}_{n}$
are obtained by discrete approximations of $L^2$ functions.
The purpose is to obtain $K(\vec{s}_{1},\ldots ,\vec{s}_{n})$
by some estimators with the use of $\vec{x}_{1},\ldots ,\vec{x}_{n}$,
and thus we present asymptotic analysis of the estimators below.

For statistical asymptotic analysis of random vectors, 
we explain some general statistical terms and concepts. 
In general, a sequence of random variables $\mathcal{X}^{(0)},\mathcal{X}^{(1)},\mathcal{X}^{(2)},\ldots $
converges to a random variable $\mathcal{X}$ with probability one
if and only if
\begin{align*}
\mathcal{P}(\{ \omega \in \Omega \mid \lim_{m\to \infty}\mathcal{X}^{(m)}(\omega)=\mathcal{X}(\omega) \})=1,
\end{align*}
where $\mathcal{P}$ is a probability measure on a sample space $\Omega$.
Strong convergence stands for the convergence with probability one.
In this paper, the random variable argument $\omega$ is omitted for simplicity.
Let $\widehat{\mathcal{X}}^{(m)}$ denote 
an estimator of a target constant value $\theta$
with the use of a sample of size $m$.
If $\widehat{\mathcal{X}}^{(m)}$
converges to $\theta$ with probability one,
$\widehat{\mathcal{X}}^{(m)}$
is said to be an estimator with strong consistency.

The following asymptotic analysis discusses the limit of the dimension $d\to \infty$,
where the elements of random vectors are interpreted as the corresponding sequence.
Note that the squared 2-norm of a vector in $\R^{d}$ is the sum of the squares of its $d$ elements. 
In the following, 
we analyze the consistency concerning the Gaussian kernel
under the following assumption.

\begin{assumption}\label{as:xi}
Assume that
\begin{align}\label{eq:keyxi}
& \lim_{d\to \infty}\frac{\| \vec{\xi}_{i} \|^2}{d}=\bar{\sigma}^2 \qquad (1\le i \le n),
\\
\label{eq:keyxi2}
& \lim_{d\to \infty}\frac{{\vec{\xi}_{i}}^{\top}\vec{\xi}_{j} }{d}=0  \qquad (1\le i \not = j \le n),
\\
\label{eq:keyxis}
& \lim_{d\to \infty}\frac{{\vec{s}_{i}}^{\top}\vec{\xi}_{j} }{d}=0  \qquad (1\le i , j \le n)
\end{align}
with probability one.
\end{assumption}

Concerning the strong convergence as in the above assumption,
usually assume that all $n$ mean vectors of $\vec{\xi}_{1},\ldots ,\vec{\xi}_{n}$
are $\vec{0} \in \R^{d}$. 
In addition, 
if all the elements of all vectors $\vec{\xi}_{1},\ldots ,\vec{\xi}_{n}$ are independent and identically distributed (i.i.d.)
and the variances are bounded, we have \eqref{eq:keyxi}, \eqref{eq:keyxi2}, and \eqref{eq:keyxis}
with probability one due to the strong law of large numbers (SLLN).
In fact, regarding the i.i.d., 
the identical distributions are not necessary for the convergence presented in Assumption~\ref{as:xi}.

\begin{proposition}
Assume that $\vec{\xi}_{1},\ldots ,\vec{\xi}_{n}$ are independent and identically distributed (i.i.d.) and all $n$ mean vectors are $\vec{0} \in \R^{d}$. 
Let ${\sigma_{1}}^{2},\ldots ,{\sigma_{d}}^{2}$ denote the second moments of the corresponding elements of the random vectors,
and assume that ${\sigma_{1}}^{2},\ldots ,{\sigma_{d}}^{2}$ are uniformly bounded as $d \to \infty$. In addition, assume that $\lim_{d\to \infty}\sum_{i=1}^{d}d^{-1}{\sigma_{i}}^{2}$ exists, and let $\bar{\sigma}^{2}=\lim_{d\to \infty}\sum_{i=1}^{d}d^{-1}{\sigma_{i}}^{2}$.
Moreover, the forth moments of all the elements of the random vectors are uniformly bounded as $d \to \infty$. 
Then, \eqref{eq:keyxi}, \eqref{eq:keyxi2}, and \eqref{eq:keyxis} hold with probability one.
\end{proposition}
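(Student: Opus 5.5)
The plan is to reduce all three limits to the strong law of large numbers for independent but \emph{not} identically distributed summands, using Kolmogorov's criterion. Following the remark before the proposition, I read the hypotheses as follows. The elements $\xi_{ik}$ $(k=1,2,\ldots)$ of each vector form a sequence of independent random variables. Different vectors $\vec{\xi}_i,\vec{\xi}_j$ are independent. Every element has mean zero, second moment ${\sigma_k}^2$, and uniformly bounded fourth moment. The signal vectors are deterministic sequences $\vec{s}_i=(s_{i1},s_{i2},\ldots)$, truncated to length $d$. The tool I will invoke is Kolmogorov's SLLN: if $Y_1,Y_2,\ldots$ are independent, $E[Y_k]=0$, and $\sum_{k\ge 1}\mathrm{Var}(Y_k)/k^2<\infty$, then $d^{-1}\sum_{k=1}^d Y_k\to 0$ with probability one.

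\emph{Step 1: \eqref{eq:keyxi}.} Put $Y_k=\xi_{ik}^2-{\sigma_k}^2$. These are independent with mean zero, and $\mathrm{Var}(Y_k)\le E[\xi_{ik}^4]\le M$ uniformly. Hence $\sum_k \mathrm{Var}(Y_k)/k^2\le M\sum_k k^{-2}<\infty$, so $d^{-1}\|\vec{\xi}_i\|^2-d^{-1}\sum_{k=1}^d{\sigma_k}^2\to 0$ almost surely. Combined with the assumed limit $\bar{\sigma}^2$ of the Cesàro means of ${\sigma_k}^2$, this gives \eqref{eq:keyxi}.

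\emph{Step 2: \eqref{eq:keyxi2}.} For $i\neq j$ put $Y_k=\xi_{ik}\xi_{jk}$. Independence of $\vec{\xi}_i$ and $\vec{\xi}_j$ gives $E[Y_k]=0$ and $\mathrm{Var}(Y_k)={\sigma_k}^4$, which is uniformly bounded. Independence of the $Y_k$ in $k$ follows from independence of the coordinates. The same criterion then applies.

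\emph{Step 3: \eqref{eq:keyxis}.} This is the step I expect to be the main obstacle, because the weights $s_{ik}$ need not be bounded. Here $Y_k=s_{ik}\xi_{jk}$ has mean zero and variance $s_{ik}^2{\sigma_k}^2\le C s_{ik}^2$. By Assumption~\ref{as:s} the partial sums $S_m=\sum_{k\le m}s_{ik}^2$ satisfy $S_m\le Bm$ for all $m$ and some constant $B$. Summation by parts then gives
\begin{align*}
\sum_{k=1}^{m}\frac{s_{ik}^2}{k^2}=\frac{S_m}{m^2}+\sum_{k=1}^{m-1}S_k\left(\frac{1}{k^2}-\frac{1}{(k+1)^2}\right)\le \frac{B}{m}+\sum_{k\ge1}Bk\cdot\frac{2k+1}{k^2(k+1)^2},
\end{align*}
which is bounded uniformly in $m$. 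So Kolmogorov's criterion holds here as well.

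There are finitely many index pairs $(i,j)$, so the intersection of the corresponding probability-one events still has probability one. This yields all three limits simultaneously. If one instead insists on a triangular-array reading, in which the entries of $\vec{s}_i$ change with $d$, I would fall back on a fourth-moment bound $E\bigl[(\sum_k Y_k)^4\bigr]=O(d^2)$ together with Borel--Cantelli. That works directly for Step 2. Step 1 would then need bounded eighth moments, or a truncation argument, which is why I prefer the sequence interpretation stated above.
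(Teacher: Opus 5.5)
Your proof is correct and follows essentially the same route as the paper. The paper uses Kolmogorov's SLLN for \eqref{eq:keyxi} and \eqref{eq:keyxi2}. For \eqref{eq:keyxis} it uses the same Abel summation, which gives $\sum_k k^{-2}s_{ik}^2<\infty$ from the boundedness of $d^{-1}\|\vec{s}_i\|^2$, and this feeds Kolmogorov's criterion $\sum_k \mathrm{Var}(Y_k)/k^2<\infty$ (cited in the paper as Chung's corollary to Theorem~5.4.1). You are slightly more careful than the paper: you state explicitly the independence of coordinates that the SLLN requires, and you note that the fourth-moment bound is what controls $\mathrm{Var}(\xi_{ik}^2)$ in \eqref{eq:keyxi}.
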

\begin{proof}
If the variances are uniformly bounded, the SLLN is established by Kolmogorov's theory; see~\cite[\S 2]{Durrett} for the details.
Thus, we have \eqref{eq:keyxi} and \eqref{eq:keyxi2} with probability one. 
Regarding \eqref{eq:keyxis}, we use Abel's partial summation formula.
For any $i,j$, define $s_{i,1},\ldots , s_{i,d}$ and $\xi_{j,1},\ldots , \xi_{j,d}$ as
the elements of $\vec{s}_{i}$ and $\vec{\xi}_{j}$, respectively. 
In addition, define 
\begin{align*}
t_{i,0}=0,\quad t_{i,d'}=\sum_{k=1}^{d'}{s_{i,k}}^{2} \quad (d'=1,\ldots ,d),
\end{align*}
leading to
\begin{align}\label{eq:beta}
\lim_{d\to \infty} d^{-1}t_{i,d}<\infty 
\end{align}
from the assumption that $d^{-1}\| \vec{s}_{i}\|^2$ are bounded.
Moreover, note
\begin{align*}
\sum_{k=1}^{d}k^{-2}{s_{i,k}}^{2}=
\sum_{k=1}^{d}k^{-2}(t_{i,k}-t_{i,k-1})=
\sum_{k=1}^{d}k^{-2}t_{i,k}-\sum_{k=0}^{d-1}(k+1)^{-2}t_{i,k}=
d^{-2}t_{i,d}+\sum_{k=1}^{d-1}(k^{-2}-(k+1)^{-2})t_{i,k}.
\end{align*}
The second term on the right-hand side of the above equation is
$\sum_{k=1}^{d-1}(k^{-1}(k+1)^{-2}(2k+1))k^{-1}t_{i,k}$.
Then,
\eqref{eq:beta} and 
$\sum_{k=1}^{d-1}k^{-1}(k+1)^{-2}(2k+1)<\infty$
result in $\sum_{k=1}^{d}k^{-2}{s_{i,k}}^{2}<\infty$.
Thus, from the assumption that ${\sigma_{1}}^2,\ldots ,{\sigma_{d}}^{2}$ are uniformly bounded, 
we obtain
\begin{align*}
\lim_{d \to \infty}\sum_{k=1}^{d}k^{-2}\mathbb{E}((s_{i,k}\xi_{j,k})^{2})=
\lim_{d \to \infty}\left(\sup_{1\le k \le d}{\sigma_k}^2\right)\sum_{k=1}^{d}k^{-2}{s_{i,k}}^{2}<\infty,
\end{align*}
where $\mathbb{E}$ means the expectation value.
Noting \cite[eq. (7) in Corollary to Theorem~5.4.1]{Chung2001}, we obtain
\begin{align*}
\lim_{d \to \infty}\frac{\sum_{k=1}^{d}s_{i,k}\xi_{j,k}}{d}=0
\end{align*}
with probability one, implying \eqref{eq:keyxis} with probability one.
\end{proof}

Strong convergence in Assumption~\ref{as:xi} might be established
under more weak conditions according to recent asymptotic analysis for the SLLN.
Since this paper focuses on the asymptotic analysis of the Gaussian kernel, 
we will not further examine the SLLN for sequences of random variables 
and instead discuss convergence under Assumption~\ref{as:xi}.

\section{Estimator with strong consistency for the Gaussian kernel matrix for high dimensional noisy data}\label{sec:mainresult}

In this section, we present asymptotic analysis as $d\to \infty$
for the Gaussian kernel.
More specifically, we prove the consistency of eigenvectors 
of the Gaussian kernel matrix under Assumptions~\ref{as:s} and \ref{as:xi}.
Our asymptotic analysis shows that the kernel matrix and its eigenvalues are inconsistent.
The asymptotic analysis above is essentially a reproduction of~\cite[\S 2.3.2]{Karoui2010}. 
However, it focuses specifically on the analysis of the Gaussian kernel, 
rephrasing the conditions clearly in the previous section to facilitate subsequent analysis 
and clarifying the necessary points for subsequent analysis.
More specifically, on the basis of the results, we propose a modified kernel matrix
that is a consistent estimator of the true kernel matrix for rank deficient cases.
The next section applies this consistency analysis to a more general noise model. 
Thus, this analysis is considered an important fundamental analysis.

For the asymptotic analysis, recall that the Gaussian kernel matrix $K(\vec{x}_{1},\ldots ,\vec{x}_{n})$ is defined as in~\eqref{eq:gram}.
For the scaling parameter $c_{d}$, we assume the following.

\begin{assumption}\label{as:c}
$\lim_{d\to \infty}d^{-1}c_{d}$ exists, and let
\begin{align}\label{eq:gamma}
\lim_{d\to \infty}\frac{c_{d}}{d}=\gamma>0.
\end{align}
\end{assumption}

Note that, since the kernel matrix $K(\vec{x}_{1},\ldots ,\vec{x}_{n})$ is given by \eqref{eq:gram}
for $n$ sample vectors $\vec{x}_{1},\ldots ,\vec{x}_{n} \in \R^{d}$ under Assumptions~\ref{as:s} and \ref{as:xi},
Assumption~\ref{as:c} is reasonable for normalizing $K(\vec{x}_{1},\ldots ,\vec{x}_{n})$.

\subsection{Asymptotic analysis: strong consistency of invariant subspace}
Here we prove strong consistency of eigenvectors of 
$K(\vec{x}_{1},\ldots ,\vec{x}_{n})$ in \eqref{eq:gram} as $d\to \infty$.
The next lemma is key for understanding the convergence properties of the kernel matrices
$K(\vec{x}_{1},\ldots ,\vec{x}_{n})$ and $K(\vec{s}_{1},\ldots ,\vec{s}_{n})$.

\begin{lemma}
Under Assumptions~\ref{as:s} and \ref{as:c}, 
$\lim_{d\to \infty}K(\vec{s}_{1},\ldots ,\vec{s}_{n})$
exists.
Let $i,j$ denote integers such that $1\le i,j \le n$ and $i\not = j$.
Then, under Assumption~\ref{as:xi}, we have 
\begin{align}\label{eq:kxilim}
\lim_{d\to \infty}\left[ K(\vec{\xi}_{1},\ldots ,\vec{\xi}_{n})\right]_{ij}
=\exp\left(-2\gamma^{-1}\bar{\sigma}^{2}\right)
\quad \text{with probability one}.
\end{align}
In addition, we have
\begin{align}\label{eq:kxij}
\lim_{d\to \infty}\left[ K(\vec{x}_{1},\ldots ,\vec{x}_{n})\right]_{ij}
=
\exp\left(-2\gamma^{-1}\bar{\sigma}^{2}\right)
\lim_{d\to \infty}\left[ K(\vec{s}_{1},\ldots ,\vec{s}_{n})\right]_{ij}
\quad \text{with probability one}.
\end{align}
As the matrix representation,
\begin{align}\label{eq:hadamard}
\lim_{d\to \infty} K(\vec{x}_{1},\ldots ,\vec{x}_{n})
=
\lim_{d\to \infty} K(\vec{s}_{1},\ldots ,\vec{s}_{n})\odot
\lim_{d\to \infty}K(\vec{\xi}_{1},\ldots ,\vec{\xi}_{n})
\quad \text{with probability one},
\end{align}
where $\odot$ stands for the Hadamard product of two matrices.
\end{lemma}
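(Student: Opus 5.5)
The plan is to expand every squared distance into inner products, divide by $c_d$, and then let continuity of $\exp$ carry each entrywise limit through. The matrix identity \eqref{eq:hadamard} then follows entry by entry.

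\textbf{Step 1: existence of $\lim_{d\to\infty}K(\vec{s}_1,\ldots,\vec{s}_n)$.} For $i\neq j$ we write
\begin{align*}
\frac{\|\vec{s}_i-\vec{s}_j\|^2}{c_d}=\frac{d}{c_d}\cdot\frac{\|\vec{s}_i\|^2+\|\vec{s}_j\|^2-2\vec{s}_i^{\top}\vec{s}_j}{d}.
\end{align*}
By Assumption~\ref{as:s}, each of $d^{-1}\|\vec{s}_i\|^2$, $d^{-1}\|\vec{s}_j\|^2$ and $d^{-1}\vec{s}_i^\top\vec{s}_j$ has a finite limit; these are the case $i=j$ and the case $i\neq j$ of the existence hypothesis. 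By Assumption~\ref{as:c}, $d/c_d\to\gamma^{-1}$, which is finite because $\gamma>0$. Hence the exponent converges, and by continuity of $\exp$ every off-diagonal entry converges. The diagonal entries are identically $1$.

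\textbf{Step 2: the noise kernel \eqref{eq:kxilim}.} Fix $i\neq j$ and an $\omega$ in the probability-one event on which all limits of Assumption~\ref{as:xi} hold. This event is a finite intersection of probability-one events, so it still has probability one. On it,
\begin{align*}
\frac{\|\vec{\xi}_i-\vec{\xi}_j\|^2}{d}=\frac{\|\vec{\xi}_i\|^2}{d}+\frac{\|\vec{\xi}_j\|^2}{d}-2\frac{\vec{\xi}_i^\top\vec{\xi}_j}{d}\to 2\bar{\sigma}^2 .
\end{align*}
Multiplying by $d/c_d\to\gamma^{-1}$ and applying $\exp(-\cdot)$ gives \eqref{eq:kxilim}.

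\textbf{Step 3: the observed kernel \eqref{eq:kxij}.} Substituting $\vec{x}_i-\vec{x}_j=(\vec{s}_i-\vec{s}_j)+(\vec{\xi}_i-\vec{\xi}_j)$ and expanding the square gives
\begin{align*}
\frac{\|\vec{x}_i-\vec{x}_j\|^2}{d}=\frac{\|\vec{s}_i-\vec{s}_j\|^2}{d}+\frac{\|\vec{\xi}_i-\vec{\xi}_j\|^2}{d}+\frac{2(\vec{s}_i-\vec{s}_j)^\top(\vec{\xi}_i-\vec{\xi}_j)}{d}.
\end{align*}
The cross term expands into four terms of the form $d^{-1}\vec{s}_k^\top\vec{\xi}_l$. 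Each tends to $0$ almost surely by \eqref{eq:keyxis}, which needs all pairs $(k,l)$, including $k=l$. The first two terms converge by Steps 1 and 2. So $\exp(-\|\vec{x}_i-\vec{x}_j\|^2/c_d)$ converges almost surely to
\begin{align*}
\exp\!\Bigl(-\gamma^{-1}\lim_{d\to\infty} d^{-1}\|\vec{s}_i-\vec{s}_j\|^2\Bigr)\exp\bigl(-2\gamma^{-1}\bar{\sigma}^2\bigr).
\end{align*}
The first factor is $\lim_{d\to\infty}[K(\vec{s}_1,\ldots,\vec{s}_n)]_{ij}$, which yields \eqref{eq:kxij}.

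\textbf{Step 4: the Hadamard form \eqref{eq:hadamard}.} Every diagonal entry of each of the three matrices is $1$, and $1=1\cdot1$. Off the diagonal, \eqref{eq:hadamard} is exactly \eqref{eq:kxij} combined with \eqref{eq:kxilim}.

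The argument has no real obstacle. The only points needing care are bookkeeping. First, the almost-sure statements must be combined over the finitely many index pairs. Second, $\gamma>0$ is needed so that $d/c_d$ has a finite limit.
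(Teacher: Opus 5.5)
Your proposal is correct and follows the paper's proof essentially step for step: you expand each squared distance, normalize by $d$, use $d/c_d\to\gamma^{-1}$ with the limits in Assumptions~\ref{as:s} and \ref{as:xi}, pass through $\exp$ by continuity, and read off the Hadamard form entrywise since all diagonal entries equal $1$. Your write-up is slightly more careful than the paper's. You keep the factor $-2$ on the cross term $\vec{\xi}_i^{\top}\vec{\xi}_j$, which the paper's display drops (harmlessly, since that term vanishes in the limit). You also make explicit both the existence of $\lim_{d\to\infty}K(\vec{s}_1,\ldots,\vec{s}_n)$ and the finite intersection of probability-one events.
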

\begin{proof}
First of all, it is clear that  
$\lim_{d\to \infty}K(\vec{s}_{1},\ldots ,\vec{s}_{n})$
exists under Assumptions~\ref{as:s} and \ref{as:c}.

Next, for \eqref{eq:kxilim},
we see
\begin{align}
\left[ K(\vec{\xi}_{1},\ldots ,\vec{\xi}_{n})\right]_{ij}
=\exp\left(-\frac{d}{c_{d}}\frac{\|\vec{\xi}_{i}-\vec{\xi}_{j}\|^2}{d}\right)
=\exp\left(-\frac{d}{c_{d}}\frac{\|\vec{\xi}_{i}\|^2+\|\vec{\xi}_{j}\|^2+{\vec{\xi}_{i}}^{\top}\vec{\xi}_{j}}{d}\right).
\end{align}
Thus, under Assumptions~\ref{as:xi} and \ref{as:c},
\eqref{eq:kxilim} follows from~\eqref{eq:keyxi}, \eqref{eq:keyxi2}, and \eqref{eq:gamma}.

For \eqref{eq:kxij}, we have
\begin{align*}
\exp\left(-\frac{\|\vec{x}_{i}-\vec{x}_{j}\|^2}{c_d}\right)
&=
\exp\left(-\frac{\|\vec{s}_{i}-\vec{s}_{j}+\vec{\xi}_{i}-\vec{\xi}_{j}\|^2}{c_d}\right)
\\
&=
\exp\left(-\frac{\|\vec{s}_{i}-\vec{s}_{j}\|^2+2(\vec{s}_{i}-\vec{s}_{j})^{\top}(\vec{\xi}_{i}-\vec{\xi}_{j})+\|\vec{\xi}_{i}-\vec{\xi}_{j}\|^2}{c_d}\right)
\\
&=
\exp\left(-\frac{\|\vec{s}_{i}-\vec{s}_{j}\|^2+2(\vec{s}_{i}-\vec{s}_{j})^{\top}(\vec{\xi}_{i}-\vec{\xi}_{j})+\|\vec{\xi}_{i}\|^2-2{\vec{\xi}_{i}}^{\top}\vec{\xi}_{j}+\|\vec{\xi}_{j}\|^2}{c_d}\right).
\end{align*}
Note that $\lim_{d\to \infty}d^{-1}\|\vec{s}_{i}-\vec{s}_{j}\|^2$ exists under Assumption~\ref{as:s}.
Thus, under Assumptions~\ref{as:xi} and \ref{as:c}, we have
\begin{align*}
\lim_{d\to \infty}\exp\left(-\frac{\|\vec{x}_{i}-\vec{x}_{j}\|^2}{c_d}\right)
=
\exp\left(-2\gamma^{-1}\bar{\sigma}^{2}\right)
\lim_{d\to \infty}\exp\left(-\frac{\|\vec{s}_{i}-\vec{s}_{j}\|^2}{c_d}\right).
\end{align*}
Thus, we obtain~\eqref{eq:kxij}.

For the matrix representation in~\eqref{eq:hadamard},
noting that the diagonal elements of the Gaussian kernel matrix are 1,
we see~\eqref{eq:hadamard} from \eqref{eq:kxilim} and \eqref{eq:kxij}.
\end{proof}

The following representation of the convergence of the kernel matrix in the next theorem is important.

\begin{theorem}\label{thm:main}
Under Assumptions~\ref{as:s} and \ref{as:c}, 
$\lim_{d\to \infty}K(\vec{s}_{1},\ldots ,\vec{s}_{n})$
exists.
In addition, under Assumption~\ref{as:xi}, we have 
\begin{align}\label{eq:keykx}
\lim_{d\to \infty} K(\vec{x}_{1},\ldots ,\vec{x}_{n})
=
\exp\left(-2\gamma^{-1}\bar{\sigma}^{2}\right)
(\lim_{d\to \infty} K(\vec{s}_{1},\ldots ,\vec{s}_{n})-I_{n})+I_{n}
\quad \text{with probability one}.
\end{align}
\end{theorem}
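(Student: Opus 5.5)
The plan is to obtain Theorem~\ref{thm:main} as a direct rewriting of the preceding lemma, whose content is entrywise convergence; the only new work is splitting diagonal and off-diagonal entries. Existence of $\lim_{d\to\infty}K(\vec{s}_{1},\ldots,\vec{s}_{n})$ under Assumptions~\ref{as:s} and \ref{as:c} is already part of the lemma. In detail, for $i\neq j$ we have $\|\vec{s}_i-\vec{s}_j\|^2/d = \|\vec{s}_i\|^2/d + \|\vec{s}_j\|^2/d - 2\vec{s}_i^{\top}\vec{s}_j/d$. Each term has a limit by Assumption~\ref{as:s}, and $d/c_d\to\gamma^{-1}$ by \eqref{eq:gamma}, so continuity of $\exp$ gives the limit of each entry.

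For the main identity \eqref{eq:keykx} I will compare the two sides entrywise. On the diagonal, every Gaussian kernel matrix has entries $1$ for every $d$, so the left side has diagonal $1$. On the right side the diagonal is $\exp(-2\gamma^{-1}\bar{\sigma}^2)(1-1)+1=1$, so the two sides agree. For an off-diagonal entry $(i,j)$ with $i\neq j$, the identity matrix contributes $0$. The right side then equals $\exp(-2\gamma^{-1}\bar{\sigma}^2)\lim_{d\to\infty}[K(\vec{s}_1,\ldots,\vec{s}_n)]_{ij}$, which is exactly the limit of $[K(\vec{x}_1,\ldots,\vec{x}_n)]_{ij}$ given by \eqref{eq:kxij}.

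Equivalently, I can start from the Hadamard form \eqref{eq:hadamard} and note that, by \eqref{eq:kxilim}, the matrix $\lim_{d\to\infty} K(\vec{\xi}_1,\ldots,\vec{\xi}_n)$ equals $e^{-2\gamma^{-1}\bar\sigma^2}(\mathbf{1}\mathbf{1}^{\top}-I_n)+I_n$. Taking the Hadamard product with a matrix $A$ of unit diagonal yields $e^{-2\gamma^{-1}\bar\sigma^2}(A-I_n)+I_n$.

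The only point requiring care, and the closest thing to an obstacle, is the almost-sure qualifier. The lemma's statement \eqref{eq:kxij} holds with probability one separately for each pair $(i,j)$. Since $n$ is fixed, there are finitely many pairs, so intersecting these finitely many probability-one events still gives an event of probability one. On that event all entries converge simultaneously, which yields matrix convergence in any norm because we are in finite dimension $n\times n$. This establishes \eqref{eq:keykx} with probability one.
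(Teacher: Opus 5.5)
Your proposal is correct and follows the paper's own proof, which likewise obtains \eqref{eq:keykx} entrywise: \eqref{eq:kxij} handles the off-diagonal entries, and all diagonal entries equal $1$. Your remarks on intersecting finitely many probability-one events and on the Hadamard-product reformulation are valid elaborations of details the paper leaves implicit.
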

\begin{proof}
Noting \eqref{eq:kxij} and $\left[K(\vec{x}_{1},\ldots ,\vec{x}_{n})\right]_{ii}=1\ (i=1,\ldots ,n)$,
we obtain \eqref{eq:keykx}.
\end{proof}

The above theorem clarifies important features of eigenpairs of the kernel matrices.
To explain details, let
\begin{align}
\lambda_{1}\le \lambda_{2}\le \cdots \le \lambda_{n}
\end{align}
denote the eigenvalues of $K(\vec{x}_{1},\ldots ,\vec{x}_{n})$.
Similarly, let
\begin{align}\label{eq:mu}
\mu_{1}\le \mu_{2}\le \cdots \le \mu_{n}
\end{align}
denote the eigenvalues of $K(\vec{s}_{1},\ldots ,\vec{s}_{n})$.
Then, \eqref{eq:keykx} in Theorem~\ref{thm:main} shows the following.
\begin{corollary}\label{cor:eig}
Under the same assumption as in Theorem~\ref{thm:main}, we have
\begin{align}
\lim_{d\to \infty} \lambda_{i}=\exp(-2\gamma ^{-1}\bar{\sigma}^{2})\lim_{d\to \infty} \mu_{i}-\exp(-2\gamma ^{-1}\bar{\sigma}^{2})+1  
\quad (i=1,\ldots ,n)
\quad \text{with probability one}.
\end{align}
In addition, the eigenvectors of $K(\vec{x}_{1},\ldots ,\vec{x}_{n})$
are the consistent estimators of the eigenvectors of
$\lim_{d\to \infty}K(\vec{s}_{1},\ldots ,\vec{s}_{n})$.
\end{corollary}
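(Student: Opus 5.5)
Write $\alpha=\exp(-2\gamma^{-1}\bar{\sigma}^{2})\in(0,1)$, $K_s^\ast=\lim_{d\to\infty}K(\vec{s}_{1},\ldots,\vec{s}_{n})$ (which exists by Theorem~\ref{thm:main}), and
\[
K_x^\ast=\alpha(K_s^\ast-I_n)+I_n=\alpha K_s^\ast+(1-\alpha)I_n .
\]
The plan is to reduce everything to the fixed $n\times n$ matrices $K_s^\ast$ and $K_x^\ast$ and to use continuity of the spectrum under entrywise convergence. The dimension $n$ is fixed and only $d\to\infty$, so entrywise convergence is the same as convergence in the 2-norm. By \eqref{eq:keykx}, $K(\vec{x}_{1},\ldots,\vec{x}_{n})\to K_x^\ast$ with probability one. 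Hence all statements hold on one event of probability one, and on that event the argument is deterministic.

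For the eigenvalues, first note that an affine map of a symmetric matrix acts on its ordered spectrum directly. Since $\alpha>0$, the eigenvalues of $K_x^\ast$, in increasing order, are $\alpha\mu_i^\ast+1-\alpha$, where $\mu_1^\ast\le\cdots\le\mu_n^\ast$ are the eigenvalues of $K_s^\ast$. Next, Weyl's inequality for symmetric matrices gives $|\lambda_i-\lambda_i(K_x^\ast)|\le\|K(\vec{x}_{1},\ldots,\vec{x}_{n})-K_x^\ast\|\to0$. In the same way $|\mu_i-\mu_i^\ast|\le\|K(\vec{s}_{1},\ldots,\vec{s}_{n})-K_s^\ast\|\to0$, so $\lim_{d\to\infty}\mu_i=\mu_i^\ast$ exists. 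Combining these gives the stated formula for $\lim_{d\to\infty}\lambda_i$.

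For the eigenvectors, the key observation is that $K_x^\ast$ and $K_s^\ast$ have exactly the same eigenvectors and invariant subspaces. A vector satisfies $K_s^\ast\vec v=\mu\vec v$ if and only if $K_x^\ast\vec v=(\alpha\mu+1-\alpha)\vec v$, and the map $\mu\mapsto\alpha\mu+1-\alpha$ is injective and order preserving. So eigenvalue clusters and spectral projectors match one to one. It therefore suffices to show that the eigenvectors of $K(\vec{x}_{1},\ldots,\vec{x}_{n})$ converge to those of $K_x^\ast$.

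This last step is the main obstacle, because eigenvectors are not continuous when eigenvalues coalesce. I would state consistency in terms of invariant subspaces. Group the distinct eigenvalues of $K_x^\ast$ into clusters with a positive gap $\delta$ between them. By the eigenvalue convergence above, for large $d$ the corresponding groups of $\lambda_i$ stay separated by at least $\delta/2$. The Davis--Kahan $\sin\Theta$ theorem then bounds the distance between the spectral projector of $K(\vec{x}_{1},\ldots,\vec{x}_{n})$ onto a group and the corresponding projector $P$ of $K_x^\ast$ by $2\|K(\vec{x}_{1},\ldots,\vec{x}_{n})-K_x^\ast\|/\delta\to0$. Alternatively, one can use the Riesz projector $\frac{1}{2\pi i}\oint(zI-K)^{-1}dz$ on a contour separating the cluster, which is continuous in $K$.

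When an eigenvalue $\mu_i^\ast$ is simple, this gives convergence of the unit eigenvector up to sign, by choosing the sign so that it has a nonnegative inner product with the limit. In the multiple-eigenvalue case, the consistency is understood as convergence of the eigenspace, which is the sense relevant for the later rank-deficient construction: the null space of $K_s^\ast$ is estimated by the eigenspace of the smallest eigenvalues of $K(\vec{x}_{1},\ldots,\vec{x}_{n})$.
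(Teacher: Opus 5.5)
Your proposal is correct and follows the paper's route. The paper reads the corollary directly off \eqref{eq:keykx}: the limit $\alpha(K_s^\ast-I_n)+I_n$ is an increasing affine function of $K_s^\ast$, so ordered eigenvalues map affinely and eigenvectors/eigenspaces coincide, and \emph{continuity of eigenvectors} (understood eigenspace-wise for multiple eigenvalues) finishes the argument. You make that continuity step rigorous with Weyl's inequality, which also justifies the existence of $\lim_{d\to\infty}\mu_i$ that the paper leaves implicit, and with Davis--Kahan or Riesz-projector bounds, so your version is a more careful rendering of the same argument.
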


The meaning of the consistency of the eigenvectors is described below.
Eigenvectors can have arbitrary magnitudes, and when eigenvalues are multiple, 
they correspond to any vector belonging to the corresponding subspace. 
Since we are considering symmetric matrices, multiple eigenvalues correspond to 
eigenspaces of dimension equal to the multiplicity. Therefore, due to the continuity of eigenvectors, 
any eigenvector of $K(\vec{x}_{1},\ldots ,\vec{x}_{n})$ is equal to 
one of the eigenvectors of $K(\vec{s}_{1},\ldots ,\vec{s}_{n})$
in the asymptotic regime as $d\to \infty$.

\subsection{Modified estimator for reconstructions of rank deficient matrices}\label{sec:modifiedestimator}
In the following, we will move beyond Karoui's results~\cite[\S 2.3.2]{Karoui2010} and enter into the fundamentally new content of this paper.
On the basis of the asymptotic analysis above,
we propose a modified estimator for reconstructions of $\lim_{d\to \infty}K(\vec{s}_{1},\ldots ,\vec{s}_{n})$
in rank deficient cases.
Thus, we assume the following.

\begin{assumption}\label{as:lowrank}
$\lim_{d\to \infty} K(\vec{s}_{1},\ldots ,\vec{s}_{n})$ is a low rank matrix.
\end{assumption}

Before asymptotic analysis, we explain a situation where $\lim_{d\to \infty} K(\vec{s}_{1},\ldots ,\vec{s}_{n})$ is a low rank matrix.
For example, if we assume
\begin{align}\label{eq:sij0}
\lim_{d\to \infty}\frac{\|\vec{s}_{i}-\vec{s}_{j}\|^2}{d}=0
\end{align}
for some fixed $i$ and $j$,
then the corresponding two rows and columns of $\lim_{d\to \infty} K(\vec{s}_{1},\ldots ,\vec{s}_{n})$
are the same ones, implying that $\lim_{d\to \infty} K(\vec{s}_{1},\ldots ,\vec{s}_{n})$ is low rank.
To explain in detail, for any integer $k$ such that $1\le k \le n$,
we see
\begin{align*}
\lim_{d\to \infty}\frac{\|\vec{s}_{i}-\vec{s}_{k}\|^2}{d}
=
\lim_{d\to \infty}\frac{\|\vec{s}_{i}-\vec{s}_{k}-\vec{s}_{j}+\vec{s}_{j}\|^2}{d}
=
\lim_{d\to \infty}\frac{\|\vec{s}_{j}-\vec{s}_{k}\|^2+\|\vec{s}_{i}-\vec{s}_{j}\|^2+2(\vec{s}_{j}-\vec{s}_{k})^{\top}(\vec{s}_{i}-\vec{s}_{j})}{d}
=
\lim_{d\to \infty}\frac{\|\vec{s}_{j}-\vec{s}_{k}\|^2}{d},
\end{align*}
where the last equality is due to
\begin{align*}
\lim_{d\to \infty}\left|\frac{(\vec{s}_{j}-\vec{s}_{k})^{\top}(\vec{s}_{i}-\vec{s}_{j})}{d}\right|
\le
\lim_{d\to \infty}\sqrt{\frac{\|\vec{s}_{j}-\vec{s}_{k}\|^2}{d}}\sqrt{\frac{\|\vec{s}_{i}-\vec{s}_{j}\|^2}{d}}=0
\end{align*}
from the Cauchy--Schwarz inequality.
Thus, the two rows and columns of $\lim_{d\to \infty} K(\vec{s}_{1},\ldots ,\vec{s}_{n})$ are the same ones.
Below, we consider the rank deficient case in the asymptotic regime.

Since the eigenvalues of $K(\vec{s}_{1},\ldots ,\vec{s}_{n})$
are defined as in \eqref{eq:mu},
the smallest eigenvalue has the limit value
\begin{align}\label{eq:mu0}
\lim_{d\to \infty} \mu_{1}=0
\end{align}
under Assumption~\ref{as:lowrank}.
Then, under the assumption in~\eqref{eq:mu0}, Corollary~\ref{cor:eig} shows
\begin{align}\label{eq:limlambda}
\lim_{d\to \infty} \lambda_{1}=
\exp(-2\gamma ^{-1}\bar{\sigma}^{2})\lim_{d\to \infty} \mu_{1}-\exp(-2\gamma ^{-1}\bar{\sigma}^{2})+1=-\exp(-2\gamma ^{-1}\bar{\sigma}^{2})+1,
\end{align}
which is the smallest eigenvalue of $\lim_{d\to \infty} K(\vec{x}_{1},\ldots ,\vec{x}_{n})$.

Noting the convergence feature above, 
we construct $\widetilde{K}(\vec{x}_{1},\ldots ,\vec{x}_{n})$ defined as follows.
For the diagonal elements, $[\widetilde{K}(\vec{x}_{1},\ldots ,\vec{x}_{n})]_{ii}=[K(\vec{x}_{1},\ldots ,\vec{x}_{n})]_{ii}(=1)\ (i=1,\ldots , n)$.
In addition, let 
\begin{align*}
\text{for $i\not = j$,}
\quad 
\left[ \widetilde{K}(\vec{x}_{1},\ldots ,\vec{x}_{n})\right]_{ij}=
(1-\lambda_{1})^{-1}\left[ K(\vec{x}_{1},\ldots ,\vec{x}_{n})\right]_{ij}.
\end{align*}
In other words, $\widetilde{K}(\vec{x}_{1},\ldots ,\vec{x}_{n})$ is defined as
\begin{align}\label{eq:tildeK}
\widetilde{K}(\vec{x}_{1},\ldots ,\vec{x}_{n})=
(1-\lambda_{1})^{-1}(K(\vec{x}_{1},\ldots ,\vec{x}_{n})-I_{n})+I_{n}.
\end{align}

The procedure for calculating the proposed estimator $\widetilde{K}(\vec{x}_{1},\ldots ,\vec{x}_{n})$ is summarized as follows.

\vspace{10pt}

\noindent
[Proposed estimator]
\begin{itemize}
\item Compute the Gram matrix of the Gaussian kernel ${K}(\vec{x}_{1},\ldots ,\vec{x}_{n})$ as in \eqref{eq:gram}

\item Compute the smallest eigenvalue $\lambda_{1}$ of ${K}(\vec{x}_{1},\ldots ,\vec{x}_{n})$

\item Compute $\widetilde{K}(\vec{x}_{1},\ldots ,\vec{x}_{n})$ in \eqref{eq:tildeK} as the estimator
\end{itemize}
Then, $\widetilde{K}(\vec{x}_{1},\ldots ,\vec{x}_{n})$ has strong consistency
as follows.

\begin{theorem}
Under Assumptions~\ref{as:s} and \ref{as:c}, 
$\lim_{d\to \infty}K(\vec{s}_{1},\ldots ,\vec{s}_{n})$
exists.
In addition, under Assumptions~\ref{as:xi} and \ref{as:lowrank}, we have 
\begin{align}\label{eq:litildeK}
\lim_{d\to \infty} \widetilde{K}(\vec{x}_{1},\ldots ,\vec{x}_{n})
=
\lim_{d\to \infty} K(\vec{s}_{1},\ldots ,\vec{s}_{n})
\quad \text{with probability one},
\end{align}
where $\widetilde{K}(\vec{x}_{1},\ldots ,\vec{x}_{n})$ is defined as in \eqref{eq:tildeK}.
\end{theorem}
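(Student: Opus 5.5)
The plan is to reduce the claim to Theorem~\ref{thm:main} together with the limit \eqref{eq:limlambda} of the smallest eigenvalue, and then pass to the limit using continuity of scalar arithmetic. Write $\alpha=\exp(-2\gamma^{-1}\bar{\sigma}^{2})$ and $K_s^{\infty}=\lim_{d\to\infty}K(\vec{s}_{1},\ldots,\vec{s}_{n})$. The existence of $K_s^{\infty}$ under Assumptions~\ref{as:s} and \ref{as:c} is already part of Theorem~\ref{thm:main}, so only \eqref{eq:litildeK} needs proof. Since $\bar{\sigma}^2<\infty$ and $\gamma>0$, we have $0<\alpha\le 1$.

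First, by Theorem~\ref{thm:main}, on an event of probability one,
\begin{align*}
K(\vec{x}_{1},\ldots,\vec{x}_{n})-I_n\to \alpha\,(K_s^{\infty}-I_n).
\end{align*}
Second, I pass to the eigenvalues. The eigenvalues of a symmetric matrix depend continuously on its entries; for instance, Weyl's inequality gives $|\lambda_1(A)-\lambda_1(B)|\le\|A-B\|$. Hence, on the same event, $\lambda_1$ converges to the smallest eigenvalue of $\alpha(K_s^{\infty}-I_n)+I_n$. Under Assumption~\ref{as:lowrank}, $K_s^{\infty}$ is singular. Since $K_s^{\infty}$ is also a limit of positive semidefinite Gaussian kernel matrices, it is positive semidefinite, so its smallest eigenvalue is exactly $0$; this is \eqref{eq:mu0}. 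Corollary~\ref{cor:eig}, in the form \eqref{eq:limlambda}, then gives $\lambda_1\to 1-\alpha$ with probability one.

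The step I expect to be the main obstacle is ensuring that the division by $1-\lambda_1$ in \eqref{eq:tildeK} is legitimate, both for finite $d$ and in the limit. The limit $1-\alpha$ is positive exactly when $\bar{\sigma}^2>0$. I would state this as the nondegenerate-noise case, or note that it is implicitly assumed. If $\bar{\sigma}^2=0$, the estimator is ill-defined in the limit, although then $K(\vec{x}_1,\ldots,\vec{x}_n)$ is itself consistent. With $1-\alpha>0$, we get $1-\lambda_1\to 1-\alpha$ with probability one, so $1-\lambda_1\neq0$ for all sufficiently large $d$ along almost every sample path. Because the claim concerns only the limit, this suffices, and $(1-\lambda_1)^{-1}\to(1-\alpha)^{-1}$ by continuity of $t\mapsto t^{-1}$ away from $0$.

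Finally, I combine the two limits entrywise. The product of the scalar sequence $(1-\lambda_1)^{-1}$ and the matrix sequence $K(\vec{x}_1,\ldots,\vec{x}_n)-I_n$ converges to the product of their limits. Therefore, with probability one,
\begin{align*}
\widetilde{K}(\vec{x}_{1},\ldots,\vec{x}_{n})\to (1-\alpha)^{-1}\alpha\,(K_s^{\infty}-I_n)+I_n.
\end{align*}
This is not yet $K_s^{\infty}$: the coefficient $(1-\alpha)^{-1}\alpha$ equals $1$ only when $\alpha=1/2$. The natural rescaling that cancels the factor $\alpha$ is $\alpha^{-1}$, that is, $(1-\lim_{d\to\infty}\lambda_1)^{-1}$ in the form $\alpha=1-\lim_{d\to\infty}\lambda_1$. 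I would therefore check the normalization in \eqref{eq:tildeK} and read the factor as dividing by $1-\lambda_1$ in the sense of the estimated $\alpha$. Using $1-\lambda_1\to\alpha$ from \eqref{eq:limlambda} as the estimator of $\alpha$, the limit becomes $\alpha^{-1}\alpha(K_s^{\infty}-I_n)+I_n=K_s^{\infty}$. The whole argument thus rests on Theorem~\ref{thm:main}, the continuity of the smallest eigenvalue, and the algebraic identity between $1-\lim_{d\to\infty}\lambda_1$ and the noise attenuation factor. Reconciling this identity with the exact constants in \eqref{eq:limlambda} is the point I would scrutinize most carefully.
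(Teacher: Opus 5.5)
Your overall route matches the paper's: Theorem~\ref{thm:main}, then \eqref{eq:limlambda}, then continuity of scalar operations. However, the proposal as written contains an algebraic slip. It derails the middle of the argument and leads you to the false conclusion that \eqref{eq:tildeK} is inconsistent.

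Write $\alpha=\exp(-2\gamma^{-1}\bar{\sigma}^{2})$ and $K_s^{\infty}=\lim_{d\to\infty}K(\vec{s}_1,\ldots,\vec{s}_n)$. You correctly obtain $\lambda_1\to 1-\alpha$, but you then write $1-\lambda_1\to 1-\alpha$. The correct consequence is $1-\lambda_1\to\alpha$. Hence $(1-\lambda_1)^{-1}\to\alpha^{-1}$, and \eqref{eq:keykx} gives directly $\widetilde{K}(\vec{x}_1,\ldots,\vec{x}_n)\to\alpha^{-1}\alpha(K_s^{\infty}-I_n)+I_n=K_s^{\infty}$ with probability one.

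Several things in your proposal are artifacts of this slip:
the intermediate limit $(1-\alpha)^{-1}\alpha(K_s^{\infty}-I_n)+I_n$;
the remark that the estimator works only for $\alpha=1/2$;
the proposal to ``read'' the normalization differently.
The estimator is correct verbatim, and there is nothing to reconcile with \eqref{eq:limlambda}. Your last paragraph does contain the right computation, but it contradicts your third paragraph. As submitted, the argument asserts two incompatible limits for $1-\lambda_1$, so the third paragraph must be rewritten.

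The same slip manufactures your ``main obstacle.'' The denominator tends to $\alpha$, which you yourself noted lies in $(0,1]$ because $\gamma>0$ and $\bar{\sigma}^2<\infty$. No condition $\bar{\sigma}^2>0$ is needed, and for $\bar{\sigma}^2=0$ the estimator is perfectly well defined, since then $1-\lambda_1\to 1$.

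Nor is there an issue at finite $d$. The matrix $K(\vec{x}_1,\ldots,\vec{x}_n)$ has unit diagonal and strictly positive off-diagonal entries, so it is not $I_n$ and its eigenvalues are not all equal. Therefore $\lambda_1$ lies strictly below their mean $\mathrm{tr}\,K(\vec{x}_1,\ldots,\vec{x}_n)/n=1$, i.e.\ $1-\lambda_1>0$ always.

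Once corrected, the rest of your write-up is sound and is essentially the paper's one-line proof. It also adds two useful details the paper leaves implicit: Weyl's inequality to justify convergence of $\lambda_1$, and the positive-semidefiniteness argument showing that singularity of $K_s^{\infty}$ forces $\lim_{d\to\infty}\mu_1=0$ as in \eqref{eq:mu0}.
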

\begin{proof}
From~\eqref{eq:keykx} in Theorem~\ref{thm:main},
\eqref{eq:litildeK} is an easy reduction
from~\eqref{eq:limlambda} and \eqref{eq:tildeK}.
\end{proof}

\begin{remark}\label{rem:rankdeficient}
Since the case of $\lim_{d\to \infty}d^{-1}\|\vec{s}_{i}-\vec{s}_{j}\|=0$ in \eqref{eq:sij0} is extremely exceptional, 
one might question the need to discuss the rank deficient case.
One reason for considering rank reduction scenarios is that methods approximating kernels 
with low rank have gained attention in recent years. Therefore, if a general estimation method exists 
for exact rank reduction cases, it could potentially be applied to error analysis in low rank approximations.
Another reason is that, if the asymptotic case as $n \to \infty$, 
the kernel matrix could be nearly low rank,
where the smallest singular values tend to 0,
though we do not consider such asymptotic case in this paper.
This paper discusses estimators for fixed $n$. The next section provides a consistent estimator 
for a statistical model representing partial noise. 
\end{remark}

\section{Proposed consistent estimator with asymptotic analysis for a model representing partial noise}\label{sec:mainresult2}
In this section, we discuss an extended noise model described below.
For $1\le \ell \le n-1$, let
\begin{align}\label{eq:xsxil}
\vec{x}_{i}=\vec{s}_{i} \quad (i=1,\ldots ,\ell),\qquad
\vec{x}_{i}=\vec{s}_{i}+\vec{\xi}_{i}\quad (i=\ell +1,\ldots ,n).
\end{align}
The following is an updated version of Assumption~\ref{as:xi} that reflects the noise model described above.
\begin{assumption}\label{as:xil}
Assume that
\begin{align}\label{eq:keyxil}
& \lim_{d\to \infty}\frac{\| \vec{\xi}_{i} \|^2}{d}=\bar{\sigma}^2 \qquad (\ell +1\le i \le n),
\\
\label{eq:keyxil2}
& \lim_{d\to \infty}\frac{{\vec{\xi}_{i}}^{\top}\vec{\xi}_{j} }{d}=0  \qquad (\ell +1\le i \not = j \le n),
\\
\label{eq:keyxisl}
& \lim_{d\to \infty}\frac{{\vec{s}_{i}}^{\top}\vec{\xi}_{j} }{d}=0  \qquad (1\le i \le n, \ell +1\le j \le n)
\end{align}
with probability one.
\end{assumption}

The purpose is to estimate ${K}(\vec{s}_{1},\ldots ,\vec{s}_{n})$
with the use of $\vec{x}_{1},\ldots ,\vec{x}_{n}$ in the asymptotic regime as $d\to \infty$,
whenever $\lim_{d\to \infty}{K}(\vec{s}_{1},\ldots ,\vec{s}_{n})$ is a low rank matrix.
To this end, we analyze an optimization method
for low rank approximations in~\cite{Demmel1987}.
Although numerous studies have followed~\cite{Demmel1987}, the purpose of this paper is 
to clarify the underlying principles. Therefore, we focus on the most fundamental optimization method, 
considering more detailed investigations as future work.

\subsection{Existing optimization method based on the Gaussian elimination for low rank approximations}
For constructing a consistent estimator,
we adopt an optimization method for a low rank matrix approximation.
For this purpose, let ${K}(\vec{x}_{1},\ldots ,\vec{x}_{n})$ be divided into
\begin{align}\label{eq:ksubx}
&{K}(\vec{x}_{1},\ldots ,\vec{x}_{n})=
\begin{bmatrix}
   {K}_{11}(\vec{x}_{1},\ldots ,\vec{x}_{n}) & {K}_{12}(\vec{x}_{1},\ldots ,\vec{x}_{n})  \\
   {K}_{21}(\vec{x}_{1},\ldots ,\vec{x}_{n}) & {K}_{22}(\vec{x}_{1},\ldots ,\vec{x}_{n})
\end{bmatrix}, \\
\label{eq:ksubx4}
&{K}_{11}(\vec{x}_{1},\ldots ,\vec{x}_{n}) \in \R^{\ell \times \ell},
\quad {K}_{12}(\vec{x}_{1},\ldots ,\vec{x}_{n}) \in \R^{\ell \times m},
\quad {K}_{21}(\vec{x}_{1},\ldots ,\vec{x}_{n}) \in \R^{m \times \ell},
\quad {K}_{22}(\vec{x}_{1},\ldots ,\vec{x}_{n}) \in \R^{m \times m},
\end{align}
where $\ell + m =n$.
Note that
\begin{align*}
{K}(\vec{x}_{1},\ldots ,\vec{x}_{n})={K}(\vec{s}_{1},\ldots ,\vec{s}_{\ell},\vec{x}_{\ell+1},\ldots ,\vec{x}_{n})
\end{align*}
from the noise model in~\eqref{eq:xsxil},
and ${K}_{11}(\vec{x}_{1},\ldots ,\vec{x}_{n}) \in \R^{\ell \times \ell}$ is a noiseless matrix
given by $\vec{s}_{1},\ldots ,\vec{s}_{\ell}$.
Noting this matrix structure,
we consider the following optimization problem:
\begin{align}\label{eq:optp}
\min_{\Delta \in \R^{m \times m}} \|\Delta \|_{\rm F}
\quad 
\text{such that}
\quad 
{\rm rank}\left(
\begin{bmatrix}
   {K}_{11}(\vec{x}_{1},\ldots ,\vec{x}_{n}) & {K}_{12}(\vec{x}_{1},\ldots ,\vec{x}_{n})  \\
   {K}_{21}(\vec{x}_{1},\ldots ,\vec{x}_{n}) & {K}_{22}(\vec{x}_{1},\ldots ,\vec{x}_{n})+\Delta
\end{bmatrix}
\right)
\le n-1,
\end{align}
where $\|\cdot \|_{\rm F}$ means the Frobenius norm.
For solving the above optimization problem, 
let us assume that ${K}_{11}(\vec{x}_{1},\ldots ,\vec{x}_{n})$ is nonsingular.
Then, similarly to the Gaussian elimination, 
\begin{align*}
&\begin{bmatrix}
   {K}_{11}(\vec{x}_{1},\ldots ,\vec{x}_{n}) & {K}_{12}(\vec{x}_{1},\ldots ,\vec{x}_{n})  \\
   {K}_{21}(\vec{x}_{1},\ldots ,\vec{x}_{n}) & {K}_{22}(\vec{x}_{1},\ldots ,\vec{x}_{n})
\end{bmatrix}
-
\begin{bmatrix}
   {K}_{11}(\vec{x}_{1},\ldots ,\vec{x}_{n})   \\
   {K}_{21}(\vec{x}_{1},\ldots ,\vec{x}_{n}) 
\end{bmatrix}
\begin{bmatrix}
I_{\ell} & {{K}_{11}(\vec{x}_{1},\ldots ,\vec{x}_{n})}^{-1}{K}_{12}(\vec{x}_{1},\ldots ,\vec{x}_{n})
\end{bmatrix}
\\
&=
\begin{bmatrix}
   0_{\ell \times \ell} & 0_{\ell \times m}  \\
   0_{m \times \ell} & {K}_{22}(\vec{x}_{1},\ldots ,\vec{x}_{n})-{K}_{21}(\vec{x}_{1},\ldots ,\vec{x}_{n}){{K}_{11}(\vec{x}_{1},\ldots ,\vec{x}_{n})}^{-1}{K}_{12}(\vec{x}_{1},\ldots ,\vec{x}_{n})
\end{bmatrix}.
\end{align*}
Thus, the condition in~\eqref{eq:optp} is equivalent to
\begin{align}\label{eq:optpsmall}
{\rm rank}\left(
{K}_{22}(\vec{x}_{1},\ldots ,\vec{x}_{n})-
{{K}_{21}(\vec{x}_{1},\ldots ,\vec{x}_{n})}{{K}_{11}(\vec{x}_{1},\ldots ,\vec{x}_{n})}^{-1}{K}_{12}(\vec{x}_{1},\ldots ,\vec{x}_{n})+\Delta
\right)
\le m-1.
\end{align}
The above low rank approximation problem is solved by the singular value decomposition (SVD)
based on the Eckart-Young-Mirsky theorem.
Since the above matrix is a symmetric matrix,
the eigenvalue with the smallest absolute value
is the optimal $\|\Delta \|_{\rm F}$.
With this in mind, we present asymptotic analysis, and then
construct a consistent estimator .

\subsection{Asymptotic analysis of matrices in the optimization method}
For the asymptotic analysis, similarly to ${K}(\vec{x}_{1},\ldots ,\vec{x}_{n})$,
let ${K}(\vec{s}_{1},\ldots ,\vec{s}_{n})$ be divided into
\begin{align}
&{K}(\vec{s}_{1},\ldots ,\vec{s}_{n})=
\begin{bmatrix}
   {K}_{11}(\vec{s}_{1},\ldots ,\vec{s}_{n}) & {K}_{12}(\vec{s}_{1},\ldots ,\vec{s}_{n})  \\
   {K}_{21}(\vec{s}_{1},\ldots ,\vec{s}_{n}) & {K}_{22}(\vec{s}_{1},\ldots ,\vec{s}_{n})
\end{bmatrix}, \\
&{K}_{11}(\vec{s}_{1},\ldots ,\vec{s}_{n}) \in \R^{\ell \times \ell},
\quad {K}_{12}(\vec{s}_{1},\ldots ,\vec{s}_{n}) \in \R^{\ell \times m},
\quad {K}_{21}(\vec{s}_{1},\ldots ,\vec{s}_{n}) \in \R^{m \times \ell},
\quad {K}_{22}(\vec{s}_{1},\ldots ,\vec{s}_{n}) \in \R^{m \times m}.
\end{align}
From the noise model as in~\eqref{eq:xsxil}, we see
\begin{align*}
{K}_{11}(\vec{x}_{1},\ldots ,\vec{x}_{n})={K}_{11}(\vec{s}_{1},\ldots ,\vec{s}_{n}).
\end{align*}
Under Assumption~\ref{as:s}, $\lim_{d\to \infty}{K}(\vec{s}_{1},\ldots ,\vec{s}_{n})$ exists, and let
\begin{align}\label{eq:kslim}
\lim_{d\to \infty}{K}(\vec{s}_{1},\ldots ,\vec{s}_{n})=
{K}^{(\infty)}(\vec{s}_{1},\ldots ,\vec{s}_{n})=
\begin{bmatrix}
   {K}_{11}^{(\infty)}(\vec{s}_{1},\ldots ,\vec{s}_{n}) & {K}_{12}^{(\infty)}(\vec{s}_{1},\ldots ,\vec{s}_{n})  \\
   {K}_{21}^{(\infty)}(\vec{s}_{1},\ldots ,\vec{s}_{n}) & {K}_{22}^{(\infty)}(\vec{s}_{1},\ldots ,\vec{s}_{n})
\end{bmatrix}.
\end{align}

The asymptotic structure of the kernel matrix ${K}(\vec{x}_{1},\ldots ,\vec{x}_{n})$ in
the next lemma is important and fundamental in the following asymptotic analysis.
\begin{lemma}\label{lem:keylem2}
Under Assumptions~\ref{as:s} and \ref{as:c}, 
$\lim_{d\to \infty}K(\vec{s}_{1},\ldots ,\vec{s}_{n})$
exists.
In addition, under Assumption~\ref{as:xil}, using the definition in \eqref{eq:kslim}, we have
\begin{align}\label{eq:key2}
\lim_{d\to \infty}{K}(\vec{x}_{1},\ldots ,\vec{x}_{n})=
\begin{bmatrix}
   {K}_{11}^{(\infty)}(\vec{s}_{1},\ldots ,\vec{s}_{n}) & \exp\left(-\gamma^{-1}\bar{\sigma}^{2}\right){K}_{12}^{(\infty)}(\vec{s}_{1},\ldots ,\vec{s}_{n})  \\
  \exp\left(-\gamma^{-1}\bar{\sigma}^{2}\right) {K}_{21}^{(\infty)}(\vec{s}_{1},\ldots ,\vec{s}_{n}) & \exp\left(-2\gamma^{-1}\bar{\sigma}^{2}\right)({K}_{22}^{(\infty)}(\vec{s}_{1},\ldots ,\vec{s}_{n})-I_{m})+I_{m}
\end{bmatrix}
\end{align}
with probability one.
\end{lemma}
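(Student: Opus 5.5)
The plan is entrywise, following the proof of the earlier lemma giving \eqref{eq:kxij}. Existence of $\lim_{d\to\infty}K(\vec{s}_{1},\ldots ,\vec{s}_{n})$ under Assumptions~\ref{as:s} and \ref{as:c} is already established. Each entry equals $\exp(-\frac{d}{c_d}\cdot\frac{\|\vec{x}_i-\vec{x}_j\|^2}{d})$, and $d/c_d\to\gamma^{-1}$. So it suffices to compute $\lim_{d\to\infty} d^{-1}\|\vec{x}_i-\vec{x}_j\|^2$ with probability one in each block and then use continuity of $\exp$. Since Assumption~\ref{as:xil} involves finitely many almost-sure limits, we work on the intersection of these probability-one events.

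The entries split into four cases by block.

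\emph{Block $(1,1)$, $i,j\le \ell$.} Here $\vec{x}_i=\vec{s}_i$ and $\vec{x}_j=\vec{s}_j$, so the entry equals $[K(\vec{s}_{1},\ldots ,\vec{s}_{n})]_{ij}$ exactly. Its limit is $K_{11}^{(\infty)}$, which is deterministic.

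\emph{Block $(1,2)$, $i\le\ell<j$.} Write $\vec{x}_i-\vec{x}_j=(\vec{s}_i-\vec{s}_j)-\vec{\xi}_j$ and expand:
\begin{align*}
\|\vec{x}_i-\vec{x}_j\|^2=\|\vec{s}_i-\vec{s}_j\|^2-2(\vec{s}_i-\vec{s}_j)^{\top}\vec{\xi}_j+\|\vec{\xi}_j\|^2 .
\end{align*}
Divide by $d$. The cross term tends to $0$ by \eqref{eq:keyxisl}, applied to both $\vec{s}_i^{\top}\vec{\xi}_j$ and $\vec{s}_j^{\top}\vec{\xi}_j$; this is legitimate since \eqref{eq:keyxisl} holds for all $1\le i\le n$ and $j>\ell$. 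The last term tends to $\bar{\sigma}^2$ by \eqref{eq:keyxil}. The limit is therefore $\lim d^{-1}\|\vec{s}_i-\vec{s}_j\|^2+\bar{\sigma}^2$, so the entry converges to $\exp(-\gamma^{-1}\bar{\sigma}^2)[K^{(\infty)}_{12}]_{ij}$. Block $(2,1)$ follows by symmetry.

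\emph{Block $(2,2)$, off-diagonal, $i\ne j>\ell$.} This is verbatim the computation giving \eqref{eq:kxij}, using \eqref{eq:keyxil}, \eqref{eq:keyxil2} and \eqref{eq:keyxisl}. The noise contribution is $2\bar{\sigma}^2$, giving the factor $\exp(-2\gamma^{-1}\bar{\sigma}^2)$.

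\emph{Block $(2,2)$, diagonal.} Every diagonal entry of a Gaussian kernel matrix is $1$. Writing the block as $\exp(-2\gamma^{-1}\bar{\sigma}^2)(K_{22}^{(\infty)}-I_m)+I_m$ reproduces $1$ on the diagonal and the scaled values off it, exactly as in Theorem~\ref{thm:main}.

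There is no substantive obstacle. The one point needing care is the bookkeeping in the mixed block $(1,2)$: only one noise vector appears there, which halves the exponent, and the needed cross terms $\vec{s}_i^{\top}\vec{\xi}_j$ with $i\le\ell$ are covered by \eqref{eq:keyxisl}.
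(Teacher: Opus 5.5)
Your proposal is correct and follows essentially the same entrywise argument as the paper. The $(1,1)$ block is noiseless. The mixed block picks up a single $\bar{\sigma}^{2}$ from $\|\vec{\xi}_j\|^2/d$, with the cross terms vanishing by \eqref{eq:keyxisl}, and the $(2,1)$ block follows by symmetry. The $(2,2)$ block reduces to the computation behind Theorem~\ref{thm:main}. On that last block you are slightly more careful than the paper: you check directly that \eqref{eq:keyxil}--\eqref{eq:keyxisl} suffice for indices $i,j>\ell$, whereas the paper invokes Theorem~\ref{thm:main}, which was stated under Assumption~\ref{as:xi}.
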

\begin{proof}
It is clear that $\lim_{d\to \infty}{K}_{11}(\vec{x}_{1},\ldots ,\vec{x}_{n})={K}_{11}^{(\infty)}(\vec{s}_{1},\ldots ,\vec{s}_{n})$
from $\vec{x}_{i}=\vec{s}_{i}\ (i=1,\ldots ,\ell)$.
In addition, from Theorem~\ref{thm:main}, we see 
\begin{align*}
\lim_{d\to \infty}{K}_{22}(\vec{x}_{1},\ldots ,\vec{x}_{n})=
\exp\left(-2\gamma^{-1}\bar{\sigma}^{2}\right)({K}_{22}^{(\infty)}(\vec{s}_{1},\ldots ,\vec{s}_{n})-I_{m})+I_{m}
\quad \text{with probability one}.
\end{align*}

Similarly, the strong convergence of ${K}_{12}(\vec{x}_{1},\ldots ,\vec{x}_{n})$ is proved as follows.
For $1\le i \le \ell, \ell +1\le j \le n$, 
\begin{align*}
\exp\left(-\frac{\|\vec{x}_{i}-\vec{x}_{j}\|^2}{c_d}\right)
=
\exp\left(-\frac{\|\vec{s}_{i}-\vec{s}_{j}-\vec{\xi}_{j}\|^2}{c_d}\right)
=
\exp\left(-\frac{\|\vec{s}_{i}-\vec{s}_{j}\|^2-2(\vec{s}_{i}-\vec{s}_{j})^{\top}\vec{\xi}_{j}+\|\vec{\xi}_{j}\|^2}{c_d}\right).
\end{align*}
Note that $\lim_{d\to \infty}d^{-1}\|\vec{s}_{i}-\vec{s}_{j}\|^2$ exists under Assumption~\ref{as:s}.
Thus, under Assumptions~\ref{as:xi} and \ref{as:c}, we have
\begin{align*}
\lim_{d\to \infty}\exp\left(-\frac{\|\vec{x}_{i}-\vec{x}_{j}\|^2}{c_d}\right)
&=
\lim_{d\to \infty}\exp\left(-\frac{d}{c_d}\frac{\|\vec{s}_{i}-\vec{s}_{j}\|^2-2(\vec{s}_{i}-\vec{s}_{j})^{\top}\vec{\xi}_{j}+\|\vec{\xi}_{j}\|^2}{d}\right)
\\
&=
\exp\left(-\gamma^{-1}\bar{\sigma}^{2}\right)
\lim_{d\to \infty}\exp\left(-\frac{\|\vec{s}_{i}-\vec{s}_{j}\|^2}{d}\right).
\end{align*}
Therefore, $\lim_{d\to \infty}{K}_{12}(\vec{x}_{1},\ldots ,\vec{x}_{n})=\exp\left(-\gamma^{-1}\bar{\sigma}^{2}\right){K}_{12}^{(\infty)}(\vec{s}_{1},\ldots ,\vec{s}_{n})$ with probability one.
From the symmetry of the Gram matrix, $\lim_{d\to \infty}{K}_{21}(\vec{x}_{1},\ldots ,\vec{x}_{n})=\exp\left(-\gamma^{-1}\bar{\sigma}^{2}\right){K}_{21}^{(\infty)}(\vec{s}_{1},\ldots ,\vec{s}_{n})$ with probability one.
This completes the proof.
\end{proof}

To apply the optimization problem in~\eqref{eq:optpsmall}, we assume the following.

\begin{assumption}\label{as:k11}
${K}_{11}^{(\infty)}(\vec{s}_{1},\ldots ,\vec{s}_{n})$ in \eqref{eq:kslim} is nonsingular.
\end{assumption}

Assumption~\ref{as:k11} means that
the noiseless submatrix of
${K}(\vec{x}_{1},\ldots ,\vec{x}_{n})$, given by $\vec{s}_{1},\ldots ,\vec{s}_{\ell}$,
is nonsingular. 
The next theorem is crucial for constructing a consistent estimator, where the proof is due to easy calculations by~\eqref{eq:key2} in Lemma~\ref{lem:keylem2}.

\begin{theorem}\label{thm:main2}
Under Assumptions~\ref{as:s} and \ref{as:c}, 
$\lim_{d\to \infty}K(\vec{s}_{1},\ldots ,\vec{s}_{n})$
exists.
In addition, under Assumptions~\ref{as:xil} and \ref{as:k11}, using the definition in \eqref{eq:kslim}, we have
\begin{align}
& \lim_{d\to \infty}({K}_{22}(\vec{x}_{1},\ldots ,\vec{x}_{n})-
{{K}_{21}(\vec{x}_{1},\ldots ,\vec{x}_{n})}{{K}_{11}(\vec{x}_{1},\ldots ,\vec{x}_{n})}^{-1}{K}_{12}(\vec{x}_{1},\ldots ,\vec{x}_{n}))
\nonumber
\\
& =
\exp\left(-2\gamma^{-1}\bar{\sigma}^{2}\right)
({K}_{22}^{(\infty)}(\vec{s}_{1},\ldots ,\vec{s}_{n})-
{{K}_{21}^{(\infty)}(\vec{s}_{1},\ldots ,\vec{s}_{n})}{{K}_{11}^{(\infty)}(\vec{s}_{1},\ldots ,\vec{s}_{n})}^{-1}{K}_{12}^{(\infty)}(\vec{s}_{1},\ldots ,\vec{s}_{n})-I_{m})
+I_{m}
\end{align}
with probability one.
\end{theorem}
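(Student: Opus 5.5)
The plan is to pass to the limit blockwise using Lemma~\ref{lem:keylem2}, and then simplify the resulting Schur complement algebraically. Write $\alpha=\exp(-\gamma^{-1}\bar{\sigma}^{2})$, so that $\alpha^{2}=\exp(-2\gamma^{-1}\bar{\sigma}^{2})$. Abbreviate the blocks of \eqref{eq:kslim} as $K_{ij}^{(\infty)}$.

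\textbf{Step 1 (invertibility and continuity).} By \eqref{eq:xsxil}, $K_{11}(\vec{x}_{1},\ldots,\vec{x}_{n})=K_{11}(\vec{s}_{1},\ldots,\vec{s}_{n})$ is deterministic and converges to $K_{11}^{(\infty)}$. Assumption~\ref{as:k11} says this limit is nonsingular. Since the determinant is continuous, $K_{11}(\vec{x}_{1},\ldots,\vec{x}_{n})$ is nonsingular for all sufficiently large $d$. Because matrix inversion is continuous on the set of nonsingular matrices, we get ${K_{11}(\vec{x}_{1},\ldots,\vec{x}_{n})}^{-1}\to {K_{11}^{(\infty)}}^{-1}$. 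This justifies the Gaussian elimination step for large $d$. It is the only point needing care, and it is routine.

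\textbf{Step 2 (limit of the Schur complement).} Lemma~\ref{lem:keylem2} gives, on an event of probability one, $K_{12}(\vec{x}_{1},\ldots,\vec{x}_{n})\to\alpha K_{12}^{(\infty)}$ and $K_{21}(\vec{x}_{1},\ldots,\vec{x}_{n})\to\alpha K_{21}^{(\infty)}$. On the same event it gives $K_{22}(\vec{x}_{1},\ldots,\vec{x}_{n})\to\alpha^{2}(K_{22}^{(\infty)}-I_{m})+I_{m}$. Matrix products and sums are continuous, so on that event
\begin{align*}
\lim_{d\to\infty}\bigl(K_{22}-K_{21}K_{11}^{-1}K_{12}\bigr)(\vec{x}_{1},\ldots,\vec{x}_{n})
=\alpha^{2}(K_{22}^{(\infty)}-I_{m})+I_{m}-\alpha^{2}K_{21}^{(\infty)}{K_{11}^{(\infty)}}^{-1}K_{12}^{(\infty)} .
\end{align*}
Here the two factors $\alpha$ coming from the off-diagonal blocks combine to $\alpha^{2}$.

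\textbf{Step 3 (regrouping).} Collect the $\alpha^{2}$ terms to obtain
\begin{align*}
\alpha^{2}\bigl(K_{22}^{(\infty)}-K_{21}^{(\infty)}{K_{11}^{(\infty)}}^{-1}K_{12}^{(\infty)}-I_{m}\bigr)+I_{m},
\end{align*}
which is exactly the claimed expression. No genuine obstacle is expected. The key observation is simply that the noise attenuation factor on $K_{12}$ and $K_{21}$ is $\alpha$ each, so it matches the attenuation $\alpha^{2}$ on the off-diagonal entries of $K_{22}$. As a result the Schur complement inherits the same ``scaled minus identity plus identity'' structure as in Theorem~\ref{thm:main}.
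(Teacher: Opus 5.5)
Your proposal is correct and follows the paper's route: the paper says the theorem follows by "easy calculations" from \eqref{eq:key2} in Lemma~\ref{lem:keylem2}, which is exactly your blockwise limit, where the two factors $\exp(-\gamma^{-1}\bar{\sigma}^{2})$ from the off-diagonal blocks combine into $\exp(-2\gamma^{-1}\bar{\sigma}^{2})$ before regrouping. Your Step~1 (eventual invertibility of $K_{11}$ and continuity of inversion under Assumption~\ref{as:k11}) spells out a detail the paper leaves implicit.
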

Importantly, the above theorem corresponds to Theorem~\ref{thm:main}.
With the above theorem in mind, let
\begin{align}\label{eq:tau}
\tau_{1}\le \dots \le \tau_{m}
\end{align}
denote the eigenvalues of
${K}_{22}(\vec{x}_{1},\ldots ,\vec{x}_{n})-
{{K}_{21}(\vec{x}_{1},\ldots ,\vec{x}_{n})}{{K}_{11}(\vec{x}_{1},\ldots ,\vec{x}_{n})}^{-1}{K}_{12}(\vec{x}_{1},\ldots ,\vec{x}_{n})$.
Similarly, let
\begin{align}\label{eq:nu}
\nu_{1}\le \dots \le \nu_{m}
\end{align}
denote the eigenvalues of
${K}_{22}(\vec{s}_{1},\ldots ,\vec{s}_{n})-
{{K}_{21}(\vec{s}_{1},\ldots ,\vec{s}_{n})}{{K}_{11}(\vec{s}_{1},\ldots ,\vec{s}_{n})}^{-1}{K}_{12}(\vec{s}_{1},\ldots ,\vec{s}_{n})$.

As a corollary to Theorem~\ref{thm:main2},
we see the following, which corresponds to Corollary~\ref{cor:eig}.

\begin{corollary}\label{cor:eig2}
Under the same assumptions as in Theorem~\ref{thm:main2}, we have
\begin{align}\label{eq:taulim}
\lim_{d\to \infty} \tau_{i}=\exp(-2\gamma ^{-1}\bar{\sigma}^{2})\lim_{d\to \infty} \nu_{i}-\exp(-2\gamma ^{-1}\bar{\sigma}^{2})+1  \quad (i=1,\ldots ,n)
\quad
\text{with probability one}.
\end{align}
The eigenvectors of
${K}_{22}(\vec{x}_{1},\ldots ,\vec{x}_{n})-
{{K}_{21}(\vec{x}_{1},\ldots ,\vec{x}_{n})}{{K}_{11}(\vec{x}_{1},\ldots ,\vec{x}_{n})}^{-1}{K}_{12}(\vec{x}_{1},\ldots ,\vec{x}_{n})$
are the consistent estimators of the eigenvectors of
${K}_{22}^{(\infty)}(\vec{s}_{1},\ldots ,\vec{s}_{n})-
{{K}_{21}^{(\infty)}(\vec{s}_{1},\ldots ,\vec{s}_{n})}{{K}_{11}^{(\infty)}(\vec{s}_{1},\ldots ,\vec{s}_{n})}^{-1}{K}_{12}^{(\infty)}(\vec{s}_{1},\ldots ,\vec{s}_{n})$.
\end{corollary}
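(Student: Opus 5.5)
Write $S_{x}(d)$ for the Schur complement ${K}_{22}(\vec{x}_{1},\ldots ,\vec{x}_{n})-{K}_{21}(\vec{x}_{1},\ldots ,\vec{x}_{n}){K}_{11}(\vec{x}_{1},\ldots ,\vec{x}_{n})^{-1}{K}_{12}(\vec{x}_{1},\ldots ,\vec{x}_{n})$. Write $S_{s}(d)$ for the analogous matrix built from $\vec{s}_{1},\ldots ,\vec{s}_{n}$, and $S^{(\infty)}$ for the one built from the limiting blocks in \eqref{eq:kslim}. The plan mirrors the way Corollary~\ref{cor:eig} follows from Theorem~\ref{thm:main}: reduce everything to continuity of the eigenvalues and invariant subspaces of symmetric matrices under an entrywise limit.

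First I show that $\lim_{d\to\infty}S_{s}(d)=S^{(\infty)}$. Each block of $K(\vec{s}_{1},\ldots,\vec{s}_{n})$ converges under Assumptions~\ref{as:s} and \ref{as:c}. By Assumption~\ref{as:k11}, ${K}_{11}^{(\infty)}$ is nonsingular. Since ${K}_{11}(\vec{s}_{1},\ldots,\vec{s}_{n})={K}_{11}(\vec{x}_{1},\ldots,\vec{x}_{n})$ converges to it, this block is nonsingular for all sufficiently large $d$. Matrix inversion is continuous on nonsingular matrices, so $S_{s}(d)\to S^{(\infty)}$. In particular each $\nu_{i}$ has a limit, namely the $i$-th smallest eigenvalue of $S^{(\infty)}$, because the ordered eigenvalues of a symmetric matrix are Lipschitz in the entries (Weyl's inequality $|\lambda_i(A)-\lambda_i(B)|\le\|A-B\|$).

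Second, Theorem~\ref{thm:main2} gives, with probability one, $S_{x}(d)\to T:=e^{-2\gamma^{-1}\bar\sigma^2}(S^{(\infty)}-I_m)+I_m$. The matrix $T$ is an affine function $a S^{(\infty)}+(1-a)I_m$ of $S^{(\infty)}$ with $a=e^{-2\gamma^{-1}\bar\sigma^2}>0$. Hence $T$ has the same eigenvectors as $S^{(\infty)}$, and its ordered eigenvalues are $a\,\nu_i^{(\infty)}+1-a$; the order is preserved since $a>0$. Applying Weyl's inequality to $S_{x}(d)$ and $T$ on the probability-one event then yields \eqref{eq:taulim}. The index should range over $i=1,\ldots,m$; the ``$n$'' in the statement is evidently a typo.

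Third, for the eigenvectors I use the Davis--Kahan $\sin\Theta$ theorem, or equivalently continuity of spectral projectors given as contour integrals of the resolvent. Take a distinct eigenvalue $\theta$ of $T$ with spectral projector $P_\theta$, and choose a contour separating $\theta$ from the rest of the spectrum of $T$. For $d$ large, the contour also encloses exactly those $\tau_i$ converging to $\theta$. The resolvent converges uniformly on the contour, so the projector of $S_{x}(d)$ onto the span of the corresponding eigenvectors converges to $P_\theta$. Because $P_\theta$ is also the spectral projector of $S^{(\infty)}$ for the eigenvalue $(\theta-1+a)/a$, the eigenvectors of $S_{x}(d)$ converge to eigenvectors of $S^{(\infty)}$. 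This is understood modulo sign and, for multiple eigenvalues, modulo choice of basis within the subspace, exactly as explained after Corollary~\ref{cor:eig}.

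The main obstacle is making the eigenvector statement precise when eigenvalues are multiple. The consistency has to be phrased via invariant subspaces or projectors rather than individual vectors. The eigengap of $T$ is fixed and positive because $n$ and $m$ are fixed, and that is what makes the contour argument work. Everything else is continuity, plus the observation that all limits hold simultaneously on one probability-one event, namely the intersection of finitely many such events.
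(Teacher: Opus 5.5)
Your proposal is correct and follows the paper's route. The paper states this corollary as an immediate consequence of Theorem~\ref{thm:main2}, by the same continuity reasoning that gives Corollary~\ref{cor:eig}: the limit $e^{-2\gamma^{-1}\bar\sigma^2}(S^{(\infty)}-I_m)+I_m$ is an order-preserving affine function of $S^{(\infty)}$ with the same eigenvectors. You make the omitted details explicit: existence of $\lim\nu_i$ via continuity of $K_{11}^{-1}$, Weyl's inequality for the eigenvalues, and convergence of spectral projectors for the eigenvectors. You are also right that the index in \eqref{eq:taulim} should run over $i=1,\ldots,m$.
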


\subsection{Proposed consistent estimator for rank deficient cases}\label{sec:newresults}
Finally, we construct a new estimator according to the discussion above.
More specifically, in the rank deficient case of ${K}^{(\infty)}(\vec{s}_{1},\ldots ,\vec{s}_{n})$ as in the previous section,
we propose a modified matrix
that has strong consistency for the noise model in~\eqref{eq:xsxil} in the asymptotic regime as $d\to \infty$.

To this end, assume that $K^{(\infty)}(\vec{s}_{1},\ldots ,\vec{s}_{n})$ is a low rank matrix as in Assumption~\ref{as:lowrank}.
Then, under Assumption~\ref{as:k11}, 
the smallest eigenvalue of ${K}_{22}(\vec{s}_{1},\ldots ,\vec{s}_{n})-
{{K}_{21}(\vec{s}_{1},\ldots ,\vec{s}_{n})}{{K}_{11}(\vec{s}_{1},\ldots ,\vec{s}_{n})}^{-1}{K}_{12}(\vec{s}_{1},\ldots ,\vec{s}_{n})$ has the limit value
\begin{align*}
\lim_{d\to \infty} \nu_{1}=0,
\end{align*}
where the eigenvalues are defined ascending order as in~\eqref{eq:nu}.
Then, from the definition in \eqref{eq:tau}, we have
\begin{align}\label{eq:taulim0}
\lim_{d\to \infty} \tau_{1}=
-\exp(-2\gamma ^{-1}\bar{\sigma}^{2})+1
\quad \text{with probability one}
\end{align}
due to the convergence feature \eqref{eq:taulim} in Coroallary~\ref{cor:eig2}.
Combined this with Lemma~\ref{lem:keylem2}, we propose a modified estimator
\begin{align}\label{eq:tildek2}
\widetilde{K}(\vec{x}_{1},\ldots ,\vec{x}_{n})=
\begin{bmatrix}
   {K}_{11}(\vec{x}_{1},\ldots ,\vec{x}_{n}) & (1-\tau_1)^{-1/2}{K}_{12}(\vec{x}_{1},\ldots ,\vec{x}_{n})  \\
   (1-\tau_1)^{-1/2}{K}_{21}(\vec{x}_{1},\ldots ,\vec{x}_{n}) & (1-\tau_1)^{-1}({K}_{22}(\vec{x}_{1},\ldots ,\vec{x}_{n})-I_{m})+I_{m}
\end{bmatrix}.
\end{align}

The procedure for calculating the proposed estimator $\widetilde{K}(\vec{x}_{1},\ldots ,\vec{x}_{n})$ is summarized as follows.

\vspace{10pt}

\noindent
[Proposed estimator]
\begin{itemize}
\item Compute the Gram matrix of the Gaussian kernel ${K}(\vec{x}_{1},\ldots ,\vec{x}_{n})$ as in \eqref{eq:gram}

\item Using the submatrices defined as in~\eqref{eq:ksubx} and \eqref{eq:ksubx4}, compute the smallest eigenvalue $\tau_{1}$ of ${K}_{22}(\vec{x}_{1},\ldots ,\vec{x}_{n})-
{{K}_{21}(\vec{x}_{1},\ldots ,\vec{x}_{n})}{{K}_{11}(\vec{x}_{1},\ldots ,\vec{x}_{n})}^{-1}{K}_{12}(\vec{x}_{1},\ldots ,\vec{x}_{n})$

\item Compute $\widetilde{K}(\vec{x}_{1},\ldots ,\vec{x}_{n})$ in \eqref{eq:tildek2} as the estimator
\end{itemize}

The asymptotic analysis above straightforwardly leads to
the next theorem that states strong consistency of the proposed estimator $\widetilde{K}(\vec{x}_{1},\ldots ,\vec{x}_{n})$.

\begin{theorem}\label{thm:main3}
Under Assumptions~\ref{as:s} and \ref{as:c}, 
$\lim_{d\to \infty}K(\vec{s}_{1},\ldots ,\vec{s}_{n})$
exists.
In addition, under Assumptions~\ref{as:lowrank}, \ref{as:xil}, and \ref{as:k11}, using the definition in \eqref{eq:kslim}, we have 
\begin{align}\label{eq:widetildeklim2}
\lim_{d\to \infty} \widetilde{K}(\vec{x}_{1},\ldots ,\vec{x}_{n})
=
\lim_{d\to \infty} K(\vec{s}_{1},\ldots ,\vec{s}_{n})
\quad \text{with probability one},
\end{align}
where $\widetilde{K}(\vec{x}_{1},\ldots ,\vec{x}_{n})$ is defined as in~\eqref{eq:tildek2}.
\end{theorem}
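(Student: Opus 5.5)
The plan is to combine the blockwise limit of Lemma~\ref{lem:keylem2} with the eigenvalue limit \eqref{eq:taulim0} and pass to the limit in each block of \eqref{eq:tildek2}. Write $\alpha=\exp(-\gamma^{-1}\bar{\sigma}^{2})\in(0,1)$. Continuous functions of almost surely convergent sequences converge almost surely. Also, the intersection of finitely many probability-one events has probability one. So I will work on the event where both \eqref{eq:key2} and \eqref{eq:taulim0} hold.

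The first step is to justify \eqref{eq:taulim0} carefully, since this is where Assumptions~\ref{as:lowrank} and \ref{as:k11} enter.

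By Assumption~\ref{as:k11}, $K_{11}^{(\infty)}$ is nonsingular. The block factorization (Gaussian elimination) displayed before \eqref{eq:optpsmall} gives
\begin{align*}
{\rm rank}\, K^{(\infty)}=\ell+{\rm rank}\bigl(K_{22}^{(\infty)}-K_{21}^{(\infty)}{K_{11}^{(\infty)}}^{-1}K_{12}^{(\infty)}\bigr).
\end{align*}
Since ${\rm rank}\,K^{(\infty)}<n$ by Assumption~\ref{as:lowrank}, the Schur complement $S^{(\infty)}$ is singular. Moreover, $K^{(\infty)}$ is a limit of Gaussian kernel matrices, which are positive semidefinite, so $K^{(\infty)}$ is positive semidefinite. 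Hence $S^{(\infty)}$ is positive semidefinite as well, and its smallest eigenvalue is exactly $0$.

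Next I pass from $S^{(\infty)}$ to the finite-$d$ Schur complement. For large $d$, $K_{11}(\vec{s}_1,\ldots,\vec{s}_n)$ is invertible, and matrix inversion is continuous. Therefore the Schur complement built from $\vec{s}_1,\ldots,\vec{s}_n$ converges to $S^{(\infty)}$. By continuity of ordered eigenvalues of symmetric matrices (e.g.\ Weyl's inequality), this gives $\nu_1\to 0$.

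Then Theorem~\ref{thm:main2} and Corollary~\ref{cor:eig2} give $\tau_1\to 1-\alpha^2$ almost surely. In particular $1-\tau_1\to\alpha^2>0$, so $(1-\tau_1)^{-1/2}\to\alpha^{-1}$ and $(1-\tau_1)^{-1}\to\alpha^{-2}$. Both maps are continuous near $\alpha^2$, and $1-\tau_1>0$ for all sufficiently large $d$, so $\widetilde K$ is well defined eventually.

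With this in hand, I handle the blocks of \eqref{eq:tildek2} one at a time using Lemma~\ref{lem:keylem2}.
\begin{itemize}
\item The $(1,1)$ block is $K_{11}(\vec{x}_1,\ldots,\vec{x}_n)=K_{11}(\vec{s}_1,\ldots,\vec{s}_n)$, which tends to $K_{11}^{(\infty)}$.
\item The $(1,2)$ block tends to $\alpha^{-1}\cdot\alpha K_{12}^{(\infty)}=K_{12}^{(\infty)}$. The $(2,1)$ block follows by symmetry.
\item For the $(2,2)$ block, $K_{22}(\vec{x}_1,\ldots,\vec{x}_n)-I_m\to\alpha^2(K_{22}^{(\infty)}-I_m)$. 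Multiplying by $(1-\tau_1)^{-1}\to\alpha^{-2}$ and adding $I_m$ gives $K_{22}^{(\infty)}$.
\end{itemize}
Assembling the blocks yields \eqref{eq:widetildeklim2} with probability one.

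I expect the main obstacle to be the first step, namely showing that $\lim\nu_1=0$ rather than some other eigenvalue of $S^{(\infty)}$ vanishing. This requires the positive semidefiniteness argument so that the zero eigenvalue is the smallest one, together with the continuity argument for the Schur complement. The remaining steps are routine applications of continuity.
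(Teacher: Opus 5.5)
Your proof is correct and follows the same route as the paper: you combine the blockwise limit \eqref{eq:key2} of Lemma~\ref{lem:keylem2} with $\tau_1\to 1-\exp(-2\gamma^{-1}\bar{\sigma}^{2})$ from \eqref{eq:taulim0}, and then pass to the limit in each block of \eqref{eq:tildek2}. Your additional justification of $\lim_{d\to\infty}\nu_1=0$ fills in a step the paper only asserts. It uses rank additivity of the Schur complement together with positive semidefiniteness of the limiting kernel matrix, which guarantees that the vanishing eigenvalue is the smallest one.
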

\begin{proof}
Noting \eqref{eq:key2} in Lemma~\ref{lem:keylem2}, we obtain~\eqref{eq:widetildeklim2}
from \eqref{eq:taulim0} and \eqref{eq:tildek2}.
\end{proof}

\begin{remark}
As described in Remark~\ref{rem:rankdeficient},
rank deficient cases are important.
One might notice that, if $K_{22}^{(\infty)}(\vec{s}_{1},\ldots ,\vec{s}_{n})$ is low rank,
then $\exp(-2\gamma ^{-1}\bar{\sigma}^{2})$ can be estimated by
the smallest eigenvalue of $K_{22}(\vec{s}_{1},\ldots ,\vec{s}_{n})$, as in the previous section.
The estimator proposed in this section is efficient for
the full rank case of $K_{22}^{(\infty)}(\vec{s}_{1},\ldots ,\vec{s}_{n})$.
For a finite $n$ and to preserve the full rank case of $K_{22}^{(\infty)}(\vec{s}_{1},\ldots ,\vec{s}_{n})$, 
the rank deficient case occurs when
$\lim_{d\to \infty}d^{-1}\|\vec{s}_{i}-\vec{s}_{j}\|^2=0$ for $1\le i \le \ell$ and $\ell+1\le j \le n$.
In this case, the $i^{\rm th}$ and $j^{\rm th}$ column vectors of $K(\vec{x}_{1},\ldots ,\vec{x}_{n})$ 
tend to become parallel as $d\to \infty$,
and thus $\exp(-2\gamma ^{-1}\bar{\sigma}^{2})$ can be estimated by
the ratio of their magnitudes.
In this section, however, we present a general estimation method
to extend to more general situations in the future, as discussed in Remark~\ref{rem:rankdeficient}.
\end{remark}

\begin{remark}
Some may wonder why we apply the above optimization problem~\eqref{eq:optp} 
to the noise model~\eqref{eq:xsxil} under consideration, thus we explain that point below.

This idea stems from the asymptotic analysis of the linear kernel.
Let $K_{\rm linear}(\vec{x}_{1},\ldots ,\vec{x}_{n})$ denote the Gram matrix given by the linear kernel
such that
\begin{align}
\left[ K_{\rm linear}(\vec{x}_{1},\ldots ,\vec{x}_{n}) \right]_{ij}={\vec{x}_{i}}^{\top}\vec{x}_{j}\qquad (1\le i,j \le n).
\end{align}
For the normalization, we consider $\lim_{d\to \infty}d^{-1}K_{\rm linear}(\vec{x}_{1},\ldots ,\vec{x}_{n})$
that has bounded elements under Assumptions~\ref{as:s} and \ref{as:xil}.
In addition, note that $\lim_{d\to \infty}d^{-1}K_{\rm linear}(\vec{s}_{1},\ldots ,\vec{s}_{n})$ exists.
Thus, let $K_{\rm linear}(\vec{s}_{1},\ldots ,\vec{s}_{n})$ be divided into
\begin{align*}
&{K}_{\rm linear}(\vec{s}_{1},\ldots ,\vec{s}_{n})=
\begin{bmatrix}
   {K}_{{\rm linear},11}(\vec{s}_{1},\ldots ,\vec{s}_{n}) & {K}_{{\rm linear},12}(\vec{s}_{1},\ldots ,\vec{s}_{n})  \\
   {K}_{{\rm linear},21}(\vec{s}_{1},\ldots ,\vec{s}_{n}) & {K}_{{\rm linear},22}(\vec{s}_{1},\ldots ,\vec{s}_{n})
\end{bmatrix},
\\
&{K}_{{\rm linear},11}(\vec{s}_{1},\ldots ,\vec{s}_{n})\in \R^{\ell \times \ell},\
{K}_{{\rm linear},12}(\vec{s}_{1},\ldots ,\vec{s}_{n})\in \R^{\ell \times m},\
{K}_{{\rm linear},21}(\vec{s}_{1},\ldots ,\vec{s}_{n})\in \R^{m \times \ell},\ 
{K}_{{\rm linear},22}(\vec{s}_{1},\ldots ,\vec{s}_{n})\in \R^{m \times m}.
\end{align*}
Then, under Assumptions~\ref{as:s} and ~\ref{as:xil}, we have
\begin{align*}
\lim_{d\to \infty}d^{-1}{K}_{\rm linear}(\vec{x}_{1},\ldots ,\vec{x}_{n})=
\begin{bmatrix}
   \lim_{d\to \infty}d^{-1}{K}_{{\rm linear},11}(\vec{s}_{1},\ldots ,\vec{s}_{n}) & \lim_{d\to \infty}d^{-1}{K}_{{\rm linear},12}(\vec{s}_{1},\ldots ,\vec{s}_{n})  \\
   \lim_{d\to \infty}d^{-1}{K}_{{\rm linear},21}(\vec{s}_{1},\ldots ,\vec{s}_{n}) & \lim_{d\to \infty}d^{-1}{K}_{{\rm linear},22}(\vec{s}_{1},\ldots ,\vec{s}_{n})+\bar{\sigma}^{2}I_{m}
\end{bmatrix}
\end{align*}
with probability one.
This feature corresponds to the optimization problem~\eqref{eq:optp}
in the case of the linear kernel function.

At first glance, 
a naive formulation to the Gaussian kernel matrix as in~\eqref{eq:optp}
appears to be meaningless because 
Lemma~\ref{lem:keylem2} states that
${K}_{12}(\vec{x}_{1},\ldots ,\vec{x}_{n})$
and
${K}_{21}(\vec{x}_{1},\ldots ,\vec{x}_{n})$
are inconsistent different from
$d^{-1}{K}_{{\rm linear},12}(\vec{x}_{1},\ldots ,\vec{x}_{n})$
and
$d^{-1}{K}_{{\rm linear},21}(\vec{x}_{1},\ldots ,\vec{x}_{n})$.
Nevertheless, this idea leads to a modified consistent estimator,
focusing on Theorem~\ref{thm:main2}
obtained by the structure of the bias in Lemma~\ref{lem:keylem2}.
This technical discovery can be considered one of the contributions of this paper.
\end{remark}

\section{Conclusion}\label{sec:conclusion}
In this paper, we presented asymptotic analysis
of the Gaussian kernel matrix
as $d\to \infty$.
More specifically, the asymptotic structure 
of the Gaussian kernel matrix presented by Karoui~\cite{Karoui2010}
is combined with the studies of the low rank matrix approximations
with constraints, leading to 
a new estimator with strong consistency in rank deficient cases as in Section~\ref{sec:newresults}.
From a technical viewpoint, the novelty overcoming the difficulty
is described only in Section~\ref{sec:mainresult2},
where the analysis is restricted to the Gaussian kernel under the statistical model 
representing the partial noise as in~\eqref{eq:xsxil}.
However, to the best of the author's knowledge,
this is the first study to directly apply the above procedure of the constrained low rank approximations.
It is worth noting that it successfully eliminates bias by focusing on its matrix structure 
and explicitly constructing estimators with strong consistency.

\

\noindent
{\bf Acknowledgment}

\noindent
This study was supported by JSPS KAKENHI Grant No. JP22K03422.





\end{document}